\documentclass[11pt]{article}

\usepackage{xspace}
\usepackage{url}
\usepackage{mathtools}
\usepackage{amssymb}
\usepackage{amsthm}
\usepackage{empheq}
\usepackage{latexsym}
\usepackage{enumitem}
\usepackage{eurosym}
\usepackage{dsfont}
\usepackage{appendix}
\usepackage{color} 
\usepackage[unicode]{hyperref}
\usepackage{frcursive}
\usepackage[utf8]{inputenc}
\usepackage[T1]{fontenc}
\usepackage{geometry}
\usepackage{multirow}
\usepackage{todonotes}
\usepackage{lmodern}
\usepackage{anyfontsize} 
\usepackage{stmaryrd}
\usepackage{cleveref}
\usepackage[english]{babel}
\usepackage[english=british]{csquotes}
\usepackage{mathtools}
\usepackage{accents}
\usepackage{color}
\usepackage{comment}
\usepackage{hyperref}
\usepackage{bm}
\usepackage{algorithm}
\usepackage{algorithmicx}
\usepackage{algpseudocode}

\definecolor{red}{rgb}{0.7,0.15,0.15}
\definecolor{green}{rgb}{0,0.5,0}
\definecolor{blue}{rgb}{0,0,0.7}
\hypersetup{colorlinks, linkcolor={red}, citecolor={green}, urlcolor={blue}}

\allowdisplaybreaks

\usepackage{natbib}
\setcitestyle{numbers,open={[},close={]}}

\definecolor{red}{rgb}{0.7,0.15,0.15}
\definecolor{green}{rgb}{0,0.5,0}
\definecolor{blue}{rgb}{0,0,0.7}
\hypersetup{colorlinks, linkcolor={red}, citecolor={green}, urlcolor={blue}}
			
\makeatletter \@addtoreset{equation}{section}

\newtheorem{theorem}{Theorem}[section]
\newtheorem{assumption}[theorem]{Assumption}
\newtheorem{corollary}[theorem]{Corollary}
\newtheorem{example}[theorem]{Example}

\newtheorem{proposition}[theorem]{Proposition}

\newtheorem{definition}[theorem]{Definition}
\newtheorem{remark}[theorem]{Remark}

\usepackage{indentfirst}

\DeclareUnicodeCharacter{014D}{\=o}
\newcommand{\ba}{\begin{array}}
\newcommand{\ea}{\end{array}}
\newcommand{\be}{\begin{equation}}
\newcommand{\ee}{\end{equation}}
\newcommand{\bea}{\begin{eqnarray}}
\newcommand{\eea}{\end{eqnarray}}
\newcommand{\beaa}{\begin{eqnarray*}}
\newcommand{\eeaa}{\end{eqnarray*}}

\def\dbE{\mathbb{E}}

\def\dbR{\mathbb{R}}

\def\a{\alpha}
\def\b{\beta}

\def\d{\delta}
\def\e{\varepsilon}

\def\l{\lambda}

\def\n{\nu}
\def\si{\sigma}
\def\t{\tau}
\def\f{\varphi}
\def\th{\theta}

\def\D{\Delta}

\def\O{\Omega}
\def\cA{{\cal A}}
\def\cB{{\cal B}}

\def\cG{{\cal G}}

\def\cI{{\cal I}}

\def\cL{{\cal L}}
\def\cM{{\cal M}}

\def\cO{{\cal O}}

\def\cR{{\cal R}}

\def\cT{{\cal T}}

\def\q{\quad}
\def\pa{\partial}

\def\qed{ \hfill \vrule width.25cm height.25cm depth0cm\smallskip}

\def\bp{{\boldsymbol{p}}}
\def\bP{{\boldsymbol{P}}}

\def\1{\mathbf{1}}

\begin{document}

\title{Finite-player Optimal Stopping Games: Randomization, $\alpha$-potentiality, and Learning}
\author{Xin Guo\quad Mehdi Talbi \quad Qinxin Yan}

\maketitle

\begin{abstract}
Finite-player nonzero-sum optimal stopping games typically lead to coupled equilibrium systems whose complexity grows rapidly with the number of players. We introduce an independently randomized formulation in which each stopping rule is represented by an adapted, nondecreasing cumulative stopping process. The canonical embedding preserves pure-profile payoffs, and a pure profile is a Nash equilibrium of the original game if and only if its embedding is a Nash equilibrium of the randomized game. We adopt the $\alpha$-potential approach to  construct an $\alpha_N$-potential function, with the error $\alpha_N=O(N^{-1})$
under weak-interaction. We also identify an exact-potential subclass with a closed-form threshold equilibrium. For local stopped-status interactions, randomized payoffs admit a local stopped-mass representation, and potential maximization can be formulated as a multidimensional singular-control problem with local gradient constraints and a nonlocal condition for finite jumps. Under suitable regularity assumptions, we study the associated Hamilton–Jacobi–Bellman quasi-variational inequality and its regularity properties.  For unknown model coefficients, we propose a bounded-intensity Potential-CT-DDPG learning algorithm. Numerical experiments closely match the analytical benchmark and yield estimated best-response improvements consistent with $N^{-1}$ scaling.

\end{abstract}

\section{Introduction}
Nonzero-sum optimal stopping games arise in applications ranging from investment and market entry to adoption, abandonment, and resource-extraction decisions.  In such games,  the value of stopping for one player may depend on both the underlying dynamic system  and   other players'  stopping decisions. 
For  two-player optimal stopping games,  threshold-type Nash equilibria are
derived in \cite{DeAngelisFerrariMoriarty2018} by connecting equilibrium
characterization with coupled stopping and free-boundary
problems. $N$-player optimal stopping games are studied, for instance in
\cite{HamadeneMohammed2014,MartyrMoriarty2019,PossamaiTalbi2025}, where 
equilibrium is characterized via systems of coupled variational
inequalities, obstacle problems, or best-response problems.
 The dimension and analytical complexity of  optimal stopping games grow rapidly with the number of players, making equilibria difficult to characterize and compute.
 
One way to deal with this analytical  difficulty  is by the mean-field approach, which typically assumes player's homogeneity and their weak interactions  through the empirical distribution of the stopping times.  In such a framework, one class of mean-field optimal stopping games where interaction is mediated by the fraction of stopped agents is introduced and analyzed in \cite{Nutz2018}. Subsequently,  convergence from finite-player optimal stopping games to the mean-field game limit is investigated in \cite{NutzSanMartinTan2020}, an obstacle-problem approach is developed in \cite{Bertucci2020},  a linear programming formulation is studied in \cite{DumitrescuLeutscherTankov2021}, and  master-equation and the weak-equilibrium approaches are analyzed in \cite{PossamaiTalbi2025}, which also characterizes the existence of pure equilibria for the $N$-player game through a system of $N$ path-dependent obstacle problems. More recently,
dynamic programming and the viscosity method for mean-field optimal stopping  and its finite-population approximation have been developed in 
\cite{TalbiTouziZhang2023, TalbiTouziZhang2024}. 
 The benefit of the mean-field game approach is that, under suitable conditions, the Nash equilibrium of the mean-field game can be used to construct an approximate Nash equilibrium for the  original optimal stopping game. A major limitation of this approach is its reliance on restrictive assumptions concerning player homogeneity and the structure of their interactions.

\paragraph*{Our work.} This paper studies  finite-player optimal stopping games.
We adopt the framework of  $\alpha$-potential games, where  the change in any player’s payoff under a unilateral deviation is measured by the change of a single scalar functional called an $\alpha$-potential function. Consequently, Nash-equilibrium computation is reduced to  maximizing that functional, with an error bound $\alpha$. An  $\varepsilon$ maximizer of an $\alpha$-potential function yields an $\alpha+\varepsilon$-Nash equilibrium.  This dynamic game  approach \cite{GuoLiZhang2025,guo2026limittheorynplayeralphapotential}  relates  the error $\alpha$ to the asymmetry of the players’ mixed second-order variations. In particular,  sufficiently weak cross-player influences may yield an approximate Nash equilibrium whose error $\alpha_N$ decreases with the population size $N$.

To facilitate the $\alpha$-potential framework, we 
embed pure stopping times into a convex space of randomized stopping rules, following the classical compactification approach
called randomized stopping times
\cite{BaxterChacon1977}.  Intuitively, the players’ stopping times are generated using independent private randomization conditional on the common exogenous information. 
Technically, a randomized stopping time is represented by an adapted,
nondecreasing process $p^i$ taking values in $[0,1]$, with $p^i_t$ interpreted as the cumulative conditional probability  or cumulative stopping mass of player $i$ by time $t$. A pure
stopping time $\tau^i$ is then recovered through the embedding
$
p^i_t=\mathbf 1_{\{\tau^i\leq t\}}.
$
We will show that this randomization for optimal stopping games  preserves the payoff of every pure profile and, more strongly, a pure profile is an $\varepsilon$-Nash equilibrium of the pure stopping game if and only if its canonical embedding is an $\varepsilon$-Nash equilibrium of the randomized game (Proposition \ref{prop:euivalenceSG-RSG}). We emphasize that this randomization approach and these results hold for general optimal stopping games, beyond  the $\alpha$-potential framework.

Within the $\alpha$-potential game framework, we show that the canonical randomized extension of an $\alpha$-potential function for the pure game remains an $\alpha$-potential function, and its optimization introduces no relaxation gap (Proposition \ref{prop:equivalence-potential}).  We then identify sufficient conditions for $\alpha_N$ to be the order of $O(N^{-1})$, which include  games with dense interactions whose individual pairwise effects are of order $O(N^{-2})$, as well as  sparse interaction networks with uniformly bounded degree and edge effects of order $O(N^{-1})$.  
We also identify a subclass of  games whose payoff functions have a separation structure so that $\alpha_N=0$.
 We present an explicitly solvable example, where   the equilibrium  is derived by analyzing a collection of identical one-dimensional stopping problems (Section \ref{sec:exact_potential_example}).

However, this convexification alone does not make  general randomized optimal stopping games Markovian or local: after randomization, a payoff may still depend on the entire future stopping measures of other players. The stopped-status structure considered in this paper is therefore essential. Since player $i$'s pure payoff depends on the vector $(\mathbf{1}_{\{\tau^j\le \tau^i\}})_{j\ne i}$,  
  we can construct an $\alpha$-potential function, bounding the corresponding $\alpha_N$  by the aggregate asymmetry of the pairwise interactions (Propositions \ref{prop:local_alpha_estimate} and \ref{prop:local_influence_bound}). Consequently, an $\varepsilon$ maximizer of the potential function is an ($\alpha+\varepsilon$)-Nash equilibrium of the randomized optimal stopping game.
Moreover, 
the optimization problem for the potential function  is now translated into
a multidimensional singular-control problem. 
In other words, the convexification and the local representation enable both the potential construction and the subsequent dynamic-programming analysis.
Unlike standard singular-control models in which the marginal reward depends only on an exogenous state (see e.g. \cite{ cont2021interbank,
guo2019stochastic,soner1989regularity}), the marginal reward here depends  on the cumulative level of the singular controls through the stopped-mass vector.  The resulting
dynamic-programming equation is therefore a degenerate multidimensional
quasi-variational inequality combining a second-order operator in the
exogenous state, first-order gradient constraints in the stopping-mass
coordinates, and a nonlocal condition associated with finite jumps of the singular controls. 
For this class of  singular-control problems, we derive the Hamilton–Jacobi–Bellman quasi-variational inequality and investigate continuity, viscosity, and regularity properties of its value function, under the stated boundedness, Lipschitz, smoothness, and monotonicity assumptions. (Theorems \ref{thm:viscosity}, \ref{thm:regularity-infinite-horizon},  and~\ref{thm:verification}).

Finally, to learn the approximate Nash equilibria of general optimal stopping games when the model is unknown, we regularize the singular controls by imposing bounded, absolutely continuous stopping intensities, which converts the set of singular control into a compact continuous control set that are suitable for deterministic policy-gradient (DPG) methods. Building on the original DDPG method~\cite{LillicrapEtAl2016},  and, more specifically, the continuous-time CT-DDPG architecture of \cite{ChengGuoZhang2025} for continuous-time reinforcement learning, we propose a Potential-CT-DDPG algorithm consisting of a deterministic intensity actor, a value critic, a reparameterized advantage-rate critic, replay-buffer sampling, and target-network updates (Section \ref{sec:num_exp}). The numerical experiments indicate that the learned bounded-intensity policy approximates the analytical stopping benchmark. (Section \ref{sec:num_exp}). In the non-exact potential game at finite $N$,  the resulting empirical best-response gap scales approximately as $N^{-1}$, in agreement with the theoretical estimate for  $\alpha_N$ (Section \ref{sec:experiment-alpha})

\paragraph*{Related work.}

Our randomized optimal stopping game formulation differs  from \cite{dianetti2025entropy}, which reformulates a class of mean-field optimal stopping games using randomized stopping times and  adds entropy regularization to the resulting singular-control problem, thereby obtaining a  mixed, non-bang-bang stopping behavior. 
Recovering an equilibrium of the unregularized game in that framework would require a vanishing-entropy limit together with control of the regularization error. By contrast, our randomization serves as a convexification of the original finite-player game: the canonical embedding preserves pure-profile payoffs and pure $\varepsilon$-Nash equilibria, while the randomized extension of a potential introduces no relaxation gap. Moreover, we work directly at finite $N$, allow player-dependent reward and cost kernels, and do not first pass to a representative-agent mean-field limit as in \cite{Nutz2018}.

Our analysis adapts the dynamic $\alpha$-potential framework of \cite{GuoLiZhang2025} to finite-player optimal stopping games, where the nonconvexity of the pure stopping-time strategy space makes randomization essential. For stopped-mass interactions, the resulting potential problem takes the form of a multidimensional singular-control problem. Its dynamic-programming equation combines local gradient constraints governing continuous increases in stopping mass with an additional nonlocal condition for finite jumps, because the reward generated by an atom is evaluated at the pre-jump aggregate stopping profile. We are not aware of any existing singular-control formulation with precisely this local–nonlocal structure.
 

Algorithmically, our Potential-CT-DDPG  learning algorithm  is inspired by the deep deterministic policy-gradient methods in \cite{LillicrapEtAl2016}  for discrete-time continuous-control problems and in  \cite{ChengGuoZhang2025} for continuous-time reinforcement learning.
It combines the finite-player $\alpha_N$-potential reduction with the CT-DDPG architecture, thereby learning an approximate maximizer of the potential from simulated trajectories without requiring explicit knowledge of the state dynamics.  Unlike the entropy-regularized approach for mean-field optimal stopping games, 
our algorithm optimizes the bounded-intensity approximation of
the original potential problem without introducing an entropy penalty; it avoids integration over a continuous
action distribution in the policy update and
learns the Nash equilibrium for the original optimal stopping game.


\paragraph*{Organization of the paper.}
The remainder of the paper is organized as follows. Section \ref{sec:stopping-game} introduces the pure and randomized stopping games, and shows the consistency between the two formulations. Section \ref{sec:separation_game} presents an exact-potential separation structure and a closed-form example, which will be used as a benchmark for the  numerical experiments. Section~\ref{sec:stopped_mass_local_representation} defines a general formulation of stopping games with stopped-mass interactions, analyzes in detail the associated singular-control problem and studies the viscosity and the regularity properties of the associated  dynamic-programming equation  from the $\alpha$-potential framework.  Section \ref{sec: DDPG} introduces the Potential-CT-DDPG learning algorithm  when the model is unknown. 

\section{Stopping game setup}
\label{sec:stopping-game}

This section presents the mathematical formulation of the $N$-player stopping game.
Fix an integer $N\geq 2$. Let
$
(\Omega,\mathcal F,\mathbb F,\mathbb P),
\mathbb F=(\mathcal F_t^X)_{t\ge 0},
$
be a filtered probability space satisfying the usual conditions, generated from a continuous 
 exogenous state process
$
X=(X_t)_{t\ge 0}.
$
The filtration $\mathbb F$ represents the exogenous information
available to all players.

For every $t\ge 0$, define
$$
\mathcal T_{t}
:=
\left\{
\tau:\Omega\longrightarrow[t,\infty):
\tau\text{ is an }\mathbb F\text{-stopping time}
\right\},
$$
and write $\mathcal T:=\mathcal T_{0}$.

\subsection{The stopping game}
\label{sec:original_stopping_game}
For each $i\in[N]$, let
$$
G_i:
\bigl(\Omega\times[0,\infty)^N,
\mathcal F\otimes\mathcal B([0,\infty)^N)\bigr)
\longrightarrow
(\mathbb R,\mathcal B(\mathbb R))
$$
be jointly measurable. For a stopping profile
$\tau=(\tau^1,\ldots,\tau^N)\in\mathcal T^N$, define
$$
G_i(\tau)(\omega)
:=
G_i\bigl(
\omega,
\tau^1(\omega),
\ldots,
\tau^N(\omega)
\bigr).
$$
We assume that
$$
\sup_{\tau\in \mathcal T^N}\mathbb E^{\mathbb P}
\left[
\left|G_i(\tau)(\cdot)\right|
\right]
<\infty.
$$
The payoff of player $i$ under the pure stopping profile
$\tau$ is the deterministic functional
\begin{align}
\label{eq:pure_stopping_payoff}
J_i:\mathcal T^N\longrightarrow\mathbb R,
\qquad
J_i(\tau)
:=
\mathbb E^{\mathbb P}
\left[
G_i(\tau)(\cdot)
\right].
\end{align}
For
$
\tau^{-i}
:=
(\tau^1,\ldots,\tau^{i-1},\tau^{i+1},\ldots,\tau^N)
$
and $\sigma^i\in\mathcal T$, we use the convention
$
(\sigma^i,\tau^{-i})
:=
(\tau^1,\ldots,\tau^{i-1},
\sigma^i,
\tau^{i+1},\ldots,\tau^N).
$
Accordingly, given $\tau^{-i}$, the best-response value of player
$i$ is
\begin{align}
\label{eq:stopping_game}
    V_i(\tau^{-i})
:=
\sup_{\sigma^i\in\mathcal T}
J_i(\sigma^i,\tau^{-i}).
\tag{SG}
\end{align}
\begin{definition}[Pure $\varepsilon$-Nash equilibrium]
A profile
$\tau^\varepsilon\in\mathcal T^N$
is an $\varepsilon$-Nash equilibrium of the original stopping game~\eqref{eq:stopping_game}
if, for every $i\in[N]$,
$$
J_i(\tau^\varepsilon)
\geq
\sup_{\tau^i\in\mathcal T}
J_i(\tau^i,\tau^{\varepsilon,-i})
-\varepsilon.
$$
When $\varepsilon=0$, the profile is a Nash equilibrium.
\end{definition}
\begin{remark}[Game interaction]
    We use an open-loop information structure, such that under a unilateral
deviation by player $i$, the stopping rules of all other players
are kept fixed.
\end{remark}
\subsection{(Equivalent) randomized stopping game}
\label{sec: randomize_game}
Pure stopping times do not form a convex  space, so we consider  randomized stopping times to enlarge
the original game to a convex relaxed formulation so that each player
may distribute stopping mass over time. This relaxation retains every
pure stopping time through a canonical embedding, and we show below
that it preserves pure-profile payoffs and pure $\varepsilon$-Nash
equilibria. 

Specifically, let
$
(I,\mathcal I,\lambda)
:=
([0,1],\mathcal B([0,1]),\operatorname{Leb}).
$
Following Baxter and Chacon \cite{BaxterChacon1977}, a
randomized $\mathbb F$-stopping time is a map
$
\Theta:\Omega\times I\longrightarrow[0,\infty)
$
such that
$$
\{\Theta\leq t\}
\in
\mathcal F_t\otimes\mathcal I,
\qquad
0\leq t\leq T,
$$
and, for $\mathbb P$-a.e. $\omega$, the map
$
u\longmapsto\Theta(\omega,u)
$
is nondecreasing and left-continuous.

The cumulative stopping process associated with $\Theta$ is
$$
p_t(\omega)
:=
\lambda\left(
\left\{
u\in[0,1]:
\Theta(\omega,u)\leq t
\right\}
\right),
\qquad
t\ge 0,
$$
which is $\mathbb F$-adapted, c\`adl\`ag, nondecreasing, takes values
in $[0,1]$, and satisfies
$p_{0-}=0$ and $p_\infty=1$.
Conversely, any process with these properties generates a
randomized stopping time through
$$
\Theta^p(\omega,u)
:=
\inf\left\{
t\in[0,\infty):
p_t(\omega)>u
\right\},
$$
where the convention at $u=1$ is immaterial.

Now define
$$
\mathcal R
:=
\left\{
p:[0,\infty)\times\Omega\longrightarrow[0,1]:
\begin{array}{l}
p\text{ is }\mathbb F\text{-adapted, c\`adl\`ag,}\\
p\text{ is nondecreasing, }p_{0-}=0,\ p_\infty=1
\end{array}
\right\}.
$$
The set $\mathcal R$ is convex.

Let
$
(\overline\Omega,\overline{\mathcal F},\overline{\mathbb P})
:=
\left(
\Omega\times[0,1]^N,
\mathcal F\otimes\mathcal B([0,1]^N),
\mathbb P\otimes\lambda^{\otimes N}
\right),
$
and let $U^1,\ldots,U^N$ denote the coordinate maps on
$[0,1]^N$.
Given
$
p=(p^1,\ldots,p^N)\in\mathcal R^N$, 
define
$$
\Theta^{i,p^i}(\omega,U^i)
:=
\inf\left\{
t\ge0:
p_t^i(\omega)>U^i
\right\},
\qquad
i\in[N],
$$
and write
$$
\Theta^p
:=
\left(
\Theta^{1,p^1},\ldots,\Theta^{N,p^N}
\right).
$$
Consequently, the
randomized stopping times are conditionally independent given $\mathbb F$.

\begin{definition}[Canonical independently randomized extension]
For $p\in\mathcal R^N$, define
$$
J_i^R(p)
:=
\overline{\mathbb E}\left[
G_i(\Theta^p)(\cdot)
\right].
$$
Equivalently,
$$
J_i^R(p)
=
\int_{[0,1]^N}
J_i\left(
\Theta^{1,p^1}(\cdot,u_1),
\ldots,
\Theta^{N,p^N}(\cdot,u_N)
\right)
du_1\cdots du_N.
$$
\end{definition}

Given $p^{-i}\in\mathcal R^{N-1}$, the objective of player $i$ in this stopping game is to solve
\begin{align}
\label{eq:randomized_game}
    V_i^R(p^{-i})
:=
\sup_{q^i\in\mathcal R}
J_i^R(q^i,p^{-i}).
\tag{RSG}
\end{align}

\begin{definition}[Randomized $\varepsilon$-Nash equilibrium]
A profile
$p^\varepsilon\in\mathcal R^N$
is an $\varepsilon$-Nash equilibrium of the randomized game~\eqref{eq:randomized_game} if,
for every $i\in[N]$,
$$
J_i^R(p^\varepsilon)
\geq
\sup_{q^i\in\mathcal R}
J_i^R(q^i,p^{\varepsilon,-i})
-\varepsilon.
$$
When $\varepsilon=0$, the profile is a Nash equilibrium.
\end{definition}

Now we show the connections between the pure stopping game~\eqref{eq:stopping_game} and the randomized game~\eqref{eq:randomized_game}.
Define the canonical embedding
\begin{align*}
    \iota:\mathcal T^N
&\longrightarrow
\mathcal R^N,\\
\tau&\mapsto  (\iota(\tau)_t^i
:=
\mathbf 1_{\{\tau^i\leq t\}})_{i\in[N]}.
\end{align*}
Hence for Lebesgue-a.e. $u_i\in[0,1]$,
$
\Theta^{i,\iota(\tau)^i}(\omega,u_i)
=
\tau^i(\omega)$.
Consequently,
$$
J_i^R(\iota(\tau))
=
J_i(\tau).
$$
\begin{proposition}[Equivalence between pure and randomized stopping games]\label{prop:euivalenceSG-RSG}
For every $\varepsilon\geq0$ and every
$\tau^\varepsilon\in\mathcal T^N$,
$$
\tau^\varepsilon
\text{ is an $\varepsilon$-Nash equilibrium of the pure game~\eqref{eq:stopping_game}}
$$
if and only if
$$
\iota(\tau^\varepsilon)
\text{ is an $\varepsilon$-Nash equilibrium of the randomized game~\eqref{eq:randomized_game}}.
$$
\end{proposition}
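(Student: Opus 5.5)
The plan is to compare, for each player $i$, the two sides of the equilibrium inequality separately. By the identity $J_i^R(\iota(\tau))=J_i(\tau)$ noted just before the statement, the left-hand sides coincide: $J_i^R(\iota(\tau^\varepsilon))=J_i(\tau^\varepsilon)$. It therefore suffices to prove that the best-response values agree:
\begin{equation*}
\sup_{q^i\in\mathcal R}J_i^R\bigl(q^i,\iota(\tau^\varepsilon)^{-i}\bigr)
=\sup_{\sigma^i\in\mathcal T}J_i\bigl(\sigma^i,\tau^{\varepsilon,-i}\bigr).
\end{equation*}
Once this is established, both directions of the equivalence follow at once, for every $\varepsilon\ge0$.

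The inequality ``$\ge$'' is immediate. For each $\sigma^i\in\mathcal T$, the profile $(\iota(\sigma^i),\iota(\tau^{\varepsilon})^{-i})$ equals $\iota(\sigma^i,\tau^{\varepsilon,-i})$. Hence $J_i^R(\iota(\sigma^i),\iota(\tau^\varepsilon)^{-i})=J_i(\sigma^i,\tau^{\varepsilon,-i})$, and $\iota(\sigma^i)$ is an admissible element of $\mathcal R$.

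For ``$\le$'', fix $q^i\in\mathcal R$. Since the other players use embedded pure times, for a.e.\ $u_j$ we have $\Theta^{j,\iota(\tau^\varepsilon)^j}(\cdot,u_j)=\tau^{\varepsilon,j}$ for $j\neq i$. By Fubini (using joint measurability of $G_i$ and the integrability assumption), the equivalent integral formula for $J_i^R$ then reduces to
\begin{equation*}
J_i^R\bigl(q^i,\iota(\tau^\varepsilon)^{-i}\bigr)=\int_0^1 J_i\bigl(\Theta^{i,q^i}(\cdot,u),\tau^{\varepsilon,-i}\bigr)\,du .
\end{equation*}
The key point is that, for each fixed $u\in[0,1)$, the map $\omega\mapsto\Theta^{i,q^i}(\omega,u)=\inf\{t\ge0:q^i_t(\omega)>u\}$ is an $\mathbb F$-stopping time. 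It is the first hitting time of the open set $(u,\infty)$ by the adapted càdlàg process $q^i$, so it is an optional time, and hence a stopping time since $\mathbb F$ satisfies the usual conditions. It is finite a.s.\ because $q^i_\infty=1>u$. Therefore each integrand is at most $\sup_{\sigma^i\in\mathcal T}J_i(\sigma^i,\tau^{\varepsilon,-i})$, and integrating over $u$ gives the bound. Taking the supremum over $q^i$ completes the argument.

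The main obstacle is the measurability bookkeeping in the last step. One must check that $u\mapsto J_i(\Theta^{i,q^i}(\cdot,u),\tau^{\varepsilon,-i})$ is measurable and integrable, so that the Fubini identity holds, and that the stopping-time property holds for every $u$ and not only almost every $u$. I would handle the first point via joint measurability of $(\omega,u)\mapsto\Theta^{i,q^i}(\omega,u)$, which follows from $\{\Theta\le t\}\in\mathcal F_t\otimes\mathcal I$, combined with the uniform integrability bound $\sup_\tau\mathbb E|G_i(\tau)|<\infty$. A possible infinite value of $\Theta$ at $u=1$ sits on a null set of $u$ and can be ignored.
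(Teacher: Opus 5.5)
Your proposal is correct and follows essentially the same route as the paper. The paper's forward direction is your ``$\le$'' step (apply the pure $\varepsilon$-Nash inequality to each section $\Theta^{i,q^i}(\cdot,u_i)$ and integrate in $u_i$), and its converse is your ``$\ge$'' step via embedded pure deviations. Packaging these as an equality of best-response values, and checking that each section is a genuine $\mathbb F$-stopping time (via right-continuity of the filtration) and that Fubini applies, are details the paper leaves implicit.
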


\begin{proof}
Suppose first that $\tau^\varepsilon$ is an
$\varepsilon$-Nash equilibrium of the pure game. Fix $i\in[N]$ and
$q^i\in\mathcal R$. Since the opponents use pure strategies,
$$
J_i^R(q^i,\iota(\tau^{\varepsilon,-i}))
=
\int_0^1
J_i\left(
\Theta^{i,q^i}(\cdot,u_i),
\tau^{\varepsilon,-i}
\right)
du_i.
$$
For every fixed $u_i$, the section
$\Theta^{i,q^i}(\cdot,u_i)$ is an admissible pure stopping time.
Hence
$$
J_i\left(
\Theta^{i,q^i}(\cdot,u_i),
\tau^{\varepsilon,-i}
\right)
\leq
J_i(\tau^\varepsilon)+\varepsilon.
$$
Integrating over $u_i$ proves that
$\iota(\tau^\varepsilon)$ is an $\varepsilon$-Nash equilibrium of
the randomized game.

Conversely, every pure deviation is an admissible randomized
deviation through the embedding $\iota$. Thus a randomized
$\varepsilon$-Nash inequality at the pure profile
$\iota(\tau^\varepsilon)$ implies the corresponding pure
$\varepsilon$-Nash inequality.
\end{proof}
\begin{remark}
The preceding proposition concerns randomized equilibria that are
the embeddings of pure profiles. A general randomized Nash
equilibrium of the stopping game~\eqref{eq:randomized_game} need not be supported on pure stopping game Nash equilibria, even in
an exact potential game.
\end{remark}

\subsection{$\alpha$-potential games}

Although randomization yields a convex strategy space, computing a Nash
equilibrium directly still requires solving the players' coupled
best-response problems. We therefore seek an $\alpha$-potential
formulation, which approximates unilateral payoff variations by the
variations of a single scalar functional and thereby reduces
approximate-equilibrium computation to a single optimization problem.

We recall the $\alpha$-potential game framework introduced in
\cite{GuoLiZhang2025} and adapt it to the present stopping game.

\begin{definition}
[$\alpha$-potential 
function]
\label{def:alpha_potential}
Let
$
\mathcal A_i
\in
\left\{
\mathcal T,\mathcal R
\right\}
$ and $
\mathcal A:=\prod_{i=1}^N\mathcal A_i$,
and let $J_i:\mathcal A\to\mathbb R$ be the reward functional of
player $i$. A functional $\Phi:\mathcal A\to\mathbb R$ is called an
$\alpha$-potential function if, for every $i\in[N]$, every
$a=(a^i,a^{-i})\in\mathcal A$, and every unilateral deviation
$\widetilde a^i\in\mathcal A_i$,
$$
\left|
\left[
J_i(\widetilde a^i,a^{-i})
-
J_i(a^i,a^{-i})
\right]
-
\left[
\Phi(\widetilde a^i,a^{-i})
-
\Phi(a^i,a^{-i})
\right]
\right|
\leq\alpha.
$$
The game is then called an $\alpha$-potential game. When $\alpha=0$, the game is called an exact potential game and
$\Phi$ is called an exact potential function.
\end{definition}
\begin{proposition}[Potential maximizers and approximate equilibria]
Suppose that $\Phi$ is an $\alpha$-potential function and that
$a^\varepsilon\in\mathcal A$ satisfies
$$
\Phi(a^\varepsilon)
\geq
\sup_{a\in\mathcal A}\Phi(a)-\varepsilon.
$$
Then $a^\varepsilon$ is an $(\alpha+\varepsilon)$-Nash equilibrium.
\end{proposition}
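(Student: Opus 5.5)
The argument is a one-line chain of inequalities built directly from Definition~\ref{def:alpha_potential}. We fix a player $i\in[N]$ and an arbitrary unilateral deviation $\widetilde a^i\in\mathcal A_i$. We write $a^\varepsilon=(a^{\varepsilon,i},a^{\varepsilon,-i})$ and compare $J_i(\widetilde a^i,a^{\varepsilon,-i})$ with $J_i(a^\varepsilon)$.

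\emph{Step 1.} Apply the $\alpha$-potential inequality at the profile $a^\varepsilon$ with deviation $\widetilde a^i$. Keeping only the upper bound in the absolute value gives
\[
J_i(\widetilde a^i,a^{\varepsilon,-i})-J_i(a^\varepsilon)
\le
\Phi(\widetilde a^i,a^{\varepsilon,-i})-\Phi(a^\varepsilon)+\alpha .
\]

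\emph{Step 2.} The profile $(\widetilde a^i,a^{\varepsilon,-i})$ belongs to $\mathcal A=\prod_j\mathcal A_j$. Hence $\Phi(\widetilde a^i,a^{\varepsilon,-i})\le\sup_{a\in\mathcal A}\Phi(a)$, and the near-maximality hypothesis then gives
\[
\Phi(\widetilde a^i,a^{\varepsilon,-i})-\Phi(a^\varepsilon)\le\varepsilon .
\]

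\emph{Step 3.} Combining the two steps yields $J_i(\widetilde a^i,a^{\varepsilon,-i})\le J_i(a^\varepsilon)+\alpha+\varepsilon$. Taking the supremum over $\widetilde a^i\in\mathcal A_i$ and then letting $i$ range over $[N]$ gives exactly the $(\alpha+\varepsilon)$-Nash inequality. This holds in the pure sense when $\mathcal A_i=\mathcal T$ and in the randomized sense when $\mathcal A_i=\mathcal R$, matching the respective definitions.

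There is no genuine obstacle. The only points to check are implicit assumptions. First, $\sup_{a\in\mathcal A}\Phi(a)$ must be finite, which the hypothesis implicitly requires, since otherwise no $a^\varepsilon$ could satisfy it. Second, the deviation set in the Nash definition must coincide with $\mathcal A_i$ in the $\alpha$-potential definition. With those two points in place, the proof is the three-step chain above.
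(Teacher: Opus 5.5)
Your proof is correct. The paper states this proposition without a proof and relies on the $\alpha$-potential framework of \cite{GuoLiZhang2025}; your argument, which applies the $\alpha$-potential bound at $a^\varepsilon$ and then uses $\Phi(\widetilde a^i,a^{\varepsilon,-i})\le\sup_{\mathcal A}\Phi\le\Phi(a^\varepsilon)+\varepsilon$, is the standard argument behind it.
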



\subsection{Consistency of $\alpha$-NE between pure and randomized stopping games}
The randomization in Section~\ref{sec: randomize_game} preserves not only the Nash equilibrium, but also the $\alpha$-potentiality.

\begin{proposition}[Randomization preserves $\alpha$-potentiality]
\label{prop:random_preserves_potential}
Suppose that
$
\Phi:\mathcal T^N\longrightarrow\mathbb R
$
is an $\alpha$-potential function for the original stopping game.
Define
\begin{align}
\label{eq:randomized_potential}
    \Phi^R(p)
:=
\int_{[0,1]^N}
\Phi\left(
\Theta^{1,p^1}(\cdot,u_1),
\ldots,
\Theta^{N,p^N}(\cdot,u_N)
\right)
du.
\end{align}
Then $\Phi^R$ is an $\alpha$-potential function for the canonical
randomized game. Conversely, if $\Phi^R$ is an $\alpha$-potential function for the
randomized game, then
$\Phi(\tau)
:=
\Phi^R(\iota(\tau))$ is an $\alpha$-potential function for the pure game.
\end{proposition}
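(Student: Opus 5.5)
The forward direction is a Fubini argument in the spirit of the proof of Proposition~\ref{prop:euivalenceSG-RSG}. The key observation is that a unilateral deviation by player $i$ changes only the $i$-th randomized stopping time. The other coordinates $\Theta^{j,p^j}(\cdot,u_j)$, $j\neq i$, stay the same for every fixed $u_{-i}$.

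Fix $i$, a profile $p=(p^i,p^{-i})\in\mathcal R^N$ and a deviation $q^i\in\mathcal R$. Both $J_i^R$ and $\Phi^R$ are integrals over $[0,1]^N$, so we can write
\begin{align*}
&\bigl[J_i^R(q^i,p^{-i})-J_i^R(p)\bigr]-\bigl[\Phi^R(q^i,p^{-i})-\Phi^R(p)\bigr]\\
&\quad=\int_{[0,1]^N}\Bigl(\bigl[J_i(\Theta^{i,q^i}(\cdot,u_i),\Theta^{-i}(\cdot,u_{-i}))-J_i(\Theta^{i,p^i}(\cdot,u_i),\Theta^{-i}(\cdot,u_{-i}))\bigr]\\
&\qquad\qquad-\bigl[\Phi(\Theta^{i,q^i}(\cdot,u_i),\Theta^{-i}(\cdot,u_{-i}))-\Phi(\Theta^{i,p^i}(\cdot,u_i),\Theta^{-i}(\cdot,u_{-i}))\bigr]\Bigr)\,du .
\end{align*}
Here $\Theta^{-i}(\cdot,u_{-i})$ denotes the pure profile $(\Theta^{j,p^j}(\cdot,u_j))_{j\neq i}$.

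For each fixed $u\in[0,1]^N$, every section $\Theta^{j,p^j}(\cdot,u_j)$ and $\Theta^{i,q^i}(\cdot,u_i)$ is an $\mathbb F$-stopping time. This is the same fact used in Proposition~\ref{prop:euivalenceSG-RSG}: $\{\Theta^{j,p^j}(\cdot,u)\le t\}=\{p^j_t\ge u\}$ up to the right-continuity convention, and $p^j$ is adapted. Hence the integrand is a pure unilateral deviation by player $i$ in the original game. By the $\alpha$-potential property of $\Phi$, its absolute value is at most $\alpha$ pointwise in $u$. Integrating over $[0,1]^N$, which has unit mass, gives the bound $\alpha$.

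The step I expect to be the main obstacle is justifying that the four integrals can be split and recombined. This requires measurability of $u\mapsto\Phi(\Theta^p(\cdot,u))$ and integrability of each term. For $J_i$ this follows from joint measurability of $G_i$ and the standing integrability assumption. For $\Phi$, I would treat it as part of the meaning of definition \eqref{eq:randomized_potential}: the definition presupposes that $\Phi^R$ is well defined and finite. Then linearity of the integral suffices.

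The converse is immediate. The embedding $\iota$ maps pure profiles to randomized profiles and commutes with unilateral deviations, since $\iota(\sigma^i,\tau^{-i})=(\iota(\sigma^i),\iota(\tau)^{-i})$ componentwise. We also have $J_i^R(\iota(\tau))=J_i(\tau)$, shown before Proposition~\ref{prop:euivalenceSG-RSG}, and $\Phi(\tau)=\Phi^R(\iota(\tau))$ by definition. So the pure $\alpha$-potential inequality for $(\sigma^i,\tau^{-i})$ versus $\tau$ is exactly the randomized inequality for $\iota(\sigma^i,\tau^{-i})$ versus $\iota(\tau)$, which holds by hypothesis.
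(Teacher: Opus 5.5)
Your proposal is correct and is essentially the paper's proof. For each fixed $u\in[0,1]^N$ the two pure profiles differ only in player $i$'s stopping time, so the pure $\alpha$-potential inequality holds pointwise in $u$ and integrates to the randomized one, and the converse follows by restricting to embedded profiles via $\iota$. Your extra care about the measurability and integrability of $u\mapsto\Phi(\Theta^{p,u})$, and about the sections being stopping times, only makes explicit what the paper leaves implicit. For the stopping-time point, the clean statement is $\{\Theta^{j,p^j}(\cdot,u)<t\}=\bigcup_{s\in\mathbb Q\cap[0,t)}\{p^j_s>u\}\in\mathcal F_t$ together with right-continuity of $\mathbb F$. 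The exact identity $\{\Theta^{j,p^j}(\cdot,u)\le t\}=\{p^j_t\ge u\}$ that you wrote can fail on $\{p^j_t=u\}$.
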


\begin{proof}
Fix $i$, $p$, and a randomized unilateral deviation
$\widetilde p^i$. For every $u\in[0,1]^N$, the pure profiles
$
\Theta^{(\widetilde p^i,p^{-i}),u}$ and $
\Theta^{p,u}$
differ only in player $i$'s stopping time. Therefore, the pure
$\alpha$-potential inequality holds pointwise in $u$. Integrating
the inequality 
proves the first assertion. The converse follows by restricting
the randomized potential identity to embedded pure profiles.
\end{proof}
\begin{proposition}[No relaxation gap for a randomized potential]
\label{prop:equivalence-potential}
Let $\Phi^R$ be the randomized extension of $\Phi$ defined in~\eqref{eq:randomized_potential}.
Then
$$
\sup_{p\in\mathcal R^N}\Phi^R(p)
=
\sup_{\tau\in\mathcal T^N}\Phi(\tau).
$$
Moreover, a profile $p^\star$ maximizes $\Phi^R$ if and only if
$
\Theta^{p^\star,u}
\in
\operatorname{argmax}\Phi
$
for Lebesgue-a.e. $u\in[0,1]^N$.
\end{proposition}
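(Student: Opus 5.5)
The plan is to compare $\Phi^R$ with $\Phi$ pointwise in the randomization variable $u$, using the fact that every section of a randomized profile is a pure profile, together with the canonical embedding $\iota$.

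\textbf{Lower bound.} Since $\iota(\tau)\in\mathcal R^N$ and $\Theta^{i,\iota(\tau)^i}(\cdot,u_i)=\tau^i$ for a.e.\ $u_i$, we get $\Phi^R(\iota(\tau))=\Phi(\tau)$ for every $\tau\in\mathcal T^N$. Taking the supremum over $\tau$ gives $\sup_{\mathcal R^N}\Phi^R\geq\sup_{\mathcal T^N}\Phi$.

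\textbf{Upper bound.} Fix $p\in\mathcal R^N$ and write $\Theta^{p,u}:=(\Theta^{1,p^1}(\cdot,u_1),\ldots,\Theta^{N,p^N}(\cdot,u_N))$. The one point needing care is that, for each fixed $u\in[0,1]^N$, this section is an admissible pure profile in $\mathcal T^N$. The same fact is used in the proof of Proposition~\ref{prop:euivalenceSG-RSG}. I would verify it directly: $p^i$ is adapted and right-continuous, so
\[
\{\Theta^{i,p^i}(\cdot,u_i)<t\}=\bigcup_{s\in\mathbb Q,\,s<t}\{p^i_s>u_i\}\in\mathcal F_t .
\]
Hence $\Theta^{i,p^i}(\cdot,u_i)$ is an optional time, and it is a stopping time because the filtration satisfies the usual conditions. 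For $u_i<1$ the condition $p^i_\infty=1$ forces $\Theta^{i,p^i}(\cdot,u_i)<\infty$ a.s.; the endpoint $u_i=1$ is a null set, so the convention there does not matter. We also need $u\mapsto\Phi(\Theta^{p,u})$ to be measurable and integrable, which is implicit in the definition~\eqref{eq:randomized_potential}. Under this standing convention, $\Phi(\Theta^{p,u})\le\sup_{\mathcal T^N}\Phi$ for a.e.\ $u$. Integrating over $u$ gives $\Phi^R(p)\le\sup_{\mathcal T^N}\Phi$, and therefore the two suprema are equal.

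\textbf{Characterization of maximizers.} Set $M:=\sup_{\mathcal T^N}\Phi=\sup_{\mathcal R^N}\Phi^R$. If $M=+\infty$, a real-valued $\Phi^R$ has no maximizer and no $\tau$ attains $M$, so both sides of the equivalence are empty; assume therefore that $M<\infty$. Then
\[
M-\Phi^R(p^\star)=\int_{[0,1]^N}\bigl(M-\Phi(\Theta^{p^\star,u})\bigr)\,du ,
\]
and the integrand is nonnegative. Consequently $p^\star$ maximizes $\Phi^R$ if and only if $\Phi(\Theta^{p^\star,u})=M$ for Lebesgue-a.e.\ $u$, that is, $\Theta^{p^\star,u}\in\operatorname{argmax}\Phi$ a.e.

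I expect no serious obstacle. The only delicate points are the admissibility of the sections $\Theta^{p,u}$ and the measurability in $u$, and both are handled above.
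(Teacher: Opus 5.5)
Your proof is correct and follows the paper's argument. The embedding $\iota$ gives the lower bound, the pointwise bound $\Phi(\Theta^{p,u})\le M$ integrated over $u$ gives the upper bound, and the nonnegative integrand $M-\Phi(\Theta^{p^\star,u})$ characterizes the maximizers. Your extra checks — that each section $\Theta^{p,u}$ is an admissible stopping time, and the trivial case $M=+\infty$ — are details the paper leaves implicit.
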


\begin{proof}
Let
$
M
:=
\sup_{\tau\in\mathcal T^N}\Phi(\tau)$.
For every randomized profile $p$,
$$
\Phi^R(p)
=
\int_{[0,1]^N}
\Phi(\Theta^{p,u})\,du
\leq M.
$$
On the other hand, pure profiles are contained in the randomized
strategy space through $\iota$, and
$
\Phi^R(\iota(\tau))=\Phi(\tau)$. 
This proves equality of the suprema.
If $p^\star$ is a maximizer, then
$$
0
=
M-\Phi^R(p^\star)
=
\int_{[0,1]^N}
\left[
M-\Phi(\Theta^{p^\star,u})
\right]du.
$$
The integrand is nonnegative, and hence vanishes for a.e. $u$.
The converse is immediate.
\end{proof}

\begin{remark}[Role of independent randomization]
The preservation of $\alpha$-potentiality in
Proposition~\ref{prop:random_preserves_potential} does not require
the players' auxiliary randomizations to be independent. More
generally, the same conclusion holds if $\lambda^{\otimes N}$ is
replaced by any fixed probability measure $\kappa$ on $[0,1]^N$
with uniform marginals, independent of the exogenous probability
space. Indeed, for every fixed realization $u$, a unilateral
deviation by player $i$ changes only player $i$'s stopping time, so
the pure $\alpha$-potential inequality may be applied pointwise in
$u$ and then integrated with respect to $\kappa$.

We impose independent  randomization because the cumulative
rules $p^1,\ldots,p^N$ specify only the marginal conditional
stopping distributions, and the product coupling provides a
canonical joint law. Moreover, conditional independence is used
later in Proposition~\ref{prop:canonical_local_representation} to
derive the multi-affine local representation of the randomized
payoffs.
\end{remark}

\begin{remark}[General randomization does not imply locality]
\label{rem:general_randomization_not_local}

The Baxter--Chacon representation identifies a randomized stopping
time with its cumulative conditional distribution process. This
representation convexifies the strategy space, but it does not, by
itself, turn a general randomized stopping game into a local
singular-control problem.

Indeed, consider a general pure stopping payoff of the form
$$
J_i(\tau)
=
\mathbb E\left[
G_i\left(
\omega,\tau^1,\ldots,\tau^N
\right)
\right],
$$
where
$$
G_i:\Omega\times[0,\infty)^N\longrightarrow\mathbb R
$$
is a pathwise payoff kernel. Its canonical independently randomized
extension is
$$
J_i^R(p)
=
\mathbb E\left[
\int_{[0,\infty)^N}
G_i(\omega,t_1,\ldots,t_N)
\prod_{k=1}^N dp_{t_k}^k
\right].
$$
Equivalently, after integrating first with respect to the stopping
measure of player $i$, one may write
$$
J_i^R(p)
=
\mathbb E\left[
\int_{[0,
\infty)}
\mathcal G_i(\omega,t;p^{-i})
\,dp_t^i
\right],
$$
where
$$
\mathcal G_i(\omega,t;p^{-i})
:=
\int_{[0,\infty)^{N-1}}
G_i(\omega,t,t^{-i})
\prod_{j\neq i}dp_{t_j}^j.
$$
In general, the coefficient
$\mathcal G_i(\omega,t;p^{-i})$ depends on the entire stopping
measures
$$
dp^j|_{[0,\infty)},
\qquad j\neq i,
$$
including their restrictions to the future interval $(t,\infty)$.
Consequently, it need not be determined by the current cumulative
stopping vector $p_t$, and it need not admit an adapted Markovian
representation of the form
$$
\mathcal G_i(\omega,t;p^{-i})
=
F_i(t,X_t,p_t).
$$
\end{remark}

\subsection{Exact potentiality under a separation structure}
\label{sec:separation_game}
In this section, we introduce a class of optimal stopping games that are  exact potential games, that is, a game with $\alpha=0$.
\subsubsection{Optimal stopping game and its randomization}
Retain the setup of the  optimal stopping game in Section~\ref{sec:original_stopping_game}, and further assume the following separation structure for the payoff function.
\begin{assumption}[Separation structure]\label{ass:pure-separation}
There exist a jointly measurable common payoff kernel
$$
    \mathcal H:
    \Omega\times[0,\infty)^N\rightarrow\mathbb R
$$
and, for every $i\in[N]$, a jointly measurable residual kernel
$$
    \mathcal U_i:
    \Omega\times[0,\infty)^{N-1}\rightarrow\mathbb R
$$
such that, for $\mathbb P$-a.e.\ $\omega$ and every
$\mathbf t=(t_1,\ldots,t_N)\in[0,\infty)^N$,
$$
    G_i(\omega,\mathbf t)
    =
    \mathcal H(\omega,\mathbf t)
    +
    \mathcal U_i(\omega,\mathbf t^{-i}).
$$
In particular, $\mathcal U_i$ does not depend on player $i$'s stopping time $t_i$.
Moreover, for every
$\boldsymbol{\tau}\in\mathcal T^N_0$ and every $i\in[N]$,
$$
    \mathbb E\left[
        \left|
            \mathcal H(\omega,\boldsymbol{\tau})
        \right|
        +
        \left|
            \mathcal U_i(\omega,\boldsymbol{\tau}^{-i})
        \right|
    \right]
    <\infty.
$$
\end{assumption}

\begin{proposition}[Exact potential for the pure stopping game]
\label{prop:pure-exact-potential}
Suppose Assumption~\ref{ass:pure-separation} holds. Define
$$
    \Phi(\boldsymbol{\tau})
    :=
    \mathbb E\left[
        \mathcal H
        \left(
            \omega,
            \tau^1,\ldots,\tau^N
        \right)
    \right].
$$
Then $\Phi$ is an exact potential function for the pure stopping game. More precisely,
for every $i\in[N]$, every
$\boldsymbol{\tau}\in(\mathcal T_{0})^N$, and every unilateral deviation
$\widetilde\tau^i\in\mathcal T_{0}$,
$$
\begin{aligned}
&
J_i
\left(
    \widetilde\tau^i,\boldsymbol{\tau}^{-i}
\right)
-
J_i
\left(
    \tau^i,\boldsymbol{\tau}^{-i}
\right)=
\Phi
\left(
    \widetilde\tau^i,\boldsymbol{\tau}^{-i}
\right)
-
\Phi
\left(
    \tau^i,\boldsymbol{\tau}^{-i}
\right).
\end{aligned}
$$
Consequently, the pure stopping game is an exact potential game, that is,
$\alpha=0.$
In particular, every maximizer of $\Phi$ is a Nash equilibrium of the pure stopping game.
\end{proposition}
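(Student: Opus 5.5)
The argument is a direct computation from the separation structure in Assumption~\ref{ass:pure-separation}, using linearity of expectation. Fix $i\in[N]$, a profile $\boldsymbol{\tau}\in(\mathcal T_0)^N$ and a unilateral deviation $\widetilde\tau^i\in\mathcal T_0$. By the definition \eqref{eq:pure_stopping_payoff} and the separation identity, which holds for $\mathbb P$-a.e.\ $\omega$ and every deterministic $\mathbf t$, we may substitute $\mathbf t=\boldsymbol{\tau}(\omega)$ pathwise. This gives
$$
J_i(\boldsymbol{\tau})=\mathbb E\bigl[\mathcal H(\omega,\boldsymbol{\tau})\bigr]+\mathbb E\bigl[\mathcal U_i(\omega,\boldsymbol{\tau}^{-i})\bigr]=\Phi(\boldsymbol{\tau})+\mathbb E\bigl[\mathcal U_i(\omega,\boldsymbol{\tau}^{-i})\bigr].
$$
Splitting the expectation is legitimate because the integrability condition in Assumption~\ref{ass:pure-separation} makes both terms finite. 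Joint measurability of $\mathcal H$ and $\mathcal U_i$ ensures that $\omega\mapsto\mathcal H(\omega,\boldsymbol{\tau}(\omega))$ and $\omega\mapsto\mathcal U_i(\omega,\boldsymbol{\tau}^{-i}(\omega))$ are random variables.

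Next we apply the same identity to the deviated profile $(\widetilde\tau^i,\boldsymbol{\tau}^{-i})$. The residual term there is $\mathbb E[\mathcal U_i(\omega,\boldsymbol{\tau}^{-i})]$, which is identical to the one above, because $\mathcal U_i$ does not depend on the $i$-th coordinate and the opponents' stopping times are held fixed under the open-loop convention. Subtracting the two expressions, the residual terms cancel exactly. This yields the stated identity, so $\Phi$ satisfies Definition~\ref{def:alpha_potential} with $\alpha=0$.

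The final claim follows from the earlier proposition on potential maximizers with $\alpha=\varepsilon=0$. Alternatively, we can argue directly: if $\boldsymbol{\tau}^\star$ maximizes $\Phi$, then for every $i$ and every $\widetilde\tau^i$,
$$
J_i(\widetilde\tau^i,\boldsymbol{\tau}^{\star,-i})-J_i(\boldsymbol{\tau}^\star)=\Phi(\widetilde\tau^i,\boldsymbol{\tau}^{\star,-i})-\Phi(\boldsymbol{\tau}^\star)\le 0.
$$

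There is no real obstacle here. The only point that needs care is the null-set issue: the separation identity must hold simultaneously for all $\mathbf t$ outside a single $\mathbb P$-null set. This is exactly how the assumption is phrased, so the pathwise substitution at the random times $\boldsymbol{\tau}(\omega)$ is valid almost surely.
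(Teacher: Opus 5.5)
Your proposal is correct and follows the paper's proof essentially verbatim. Both expand $J_i$ via the separation identity, note that the residual $\mathbb E[\mathcal U_i(\omega,\boldsymbol{\tau}^{-i})]$ is unchanged under a unilateral deviation, and cancel it; your remarks on integrability, measurability, the single null set, and the maximizer claim fill in details the paper leaves implicit.
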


\begin{proof}
By Assumption~\ref{ass:pure-separation},
$$
\begin{aligned}
J_i
\left(
    \widetilde\tau^i,\boldsymbol{\tau}^{-i}
\right)
&=
\mathbb E\left[
    \mathcal H
    \left(
        \omega,
        \widetilde\tau^i,
        \boldsymbol{\tau}^{-i}
    \right)
    +
    \mathcal U_i
    \left(
        \omega,
        \boldsymbol{\tau}^{-i}
    \right)
\right],
\\
J_i
\left(
    \tau^i,\boldsymbol{\tau}^{-i}
\right)
&=
\mathbb E\left[
    \mathcal H
    \left(
        \omega,
        \tau^i,
        \boldsymbol{\tau}^{-i}
    \right)
    +
    \mathcal U_i
    \left(
        \omega,
        \boldsymbol{\tau}^{-i}
    \right)
\right].
\end{aligned}
$$
The residual term is unchanged under a unilateral deviation by player $i$. Hence 
$$
\begin{aligned}
J_i
\left(
    \widetilde\tau^i,\boldsymbol{\tau}^{-i}
\right)
-
J_i
\left(
    \tau^i,\boldsymbol{\tau}^{-i}
\right)&=
\mathbb E\left[
    \mathcal H
    \left(
        \omega,
        \widetilde\tau^i,
        \boldsymbol{\tau}^{-i}
    \right)
    -
    \mathcal H
    \left(
        \omega,
        \tau^i,
        \boldsymbol{\tau}^{-i}
    \right)
\right]
=
\Phi
\left(
    \widetilde\tau^i,\boldsymbol{\tau}^{-i}
\right)
-
\Phi
\left(
    \tau^i,\boldsymbol{\tau}^{-i}
\right).
\end{aligned}
$$
This proves the exact potentiality.
\end{proof}

\begin{remark}[Symmetric pairwise separation structure]
A sufficient condition for Assumption~\ref{ass:pure-separation} is
$$
    \mathcal G_i(\omega,\mathbf t)
    =
    c_i(\omega,t_i)
    +
    \sum_{j\neq i}
    h_{ij}(\omega,t_i,t_j)
    +
    r_i(\omega,\mathbf t^{-i}),
$$
where $r_i$ does not depend on $t_i$ and the pairwise interaction kernels satisfy
$$
    h_{ij}(\omega,s,t)
    =
    h_{ji}(\omega,t,s)
$$
for every $i\neq j$. In this case, a common potential kernel is
$$
    \mathcal H(\omega,\mathbf t)
    =
    \sum_{k=1}^N
    c_k(\omega,t_k)
    +
    \sum_{1\leq k<\ell\leq N}
    h_{k\ell}(\omega,t_k,t_\ell).
$$
\end{remark}

Now, following the definition of randomized stopping times and randomized optimal stopping games in Section~\ref{sec: randomize_game}, 
 define the randomized payoff function
$$
     J_i^R(\boldsymbol p)
    :=
    \overline{\mathbb E}\left[
         G_i
        \left(
            \omega,
            \boldsymbol{\Theta}^{\boldsymbol p}
        \right)
    \right],
$$
and a randomized potential function
$$
 \Phi^R(\boldsymbol p)
    :=
    \overline{\mathbb E}\left[
        \mathcal H
        \left(
            \omega,
            \boldsymbol{\Theta}^{p}
        \right)
    \right]=
    \int_{[0,1]^N}
    \Phi
    \left(
        \Theta^{1,p^1}(\cdot,u_1),
        \ldots,
        \Theta^{N,p^N}(\cdot,u_N)
    \right)
    du_1\cdots du_N.
$$
The following result directly follows from Proposition~\ref{prop:random_preserves_potential} with $\alpha=0$.
\begin{corollary}[Randomization preserves exact potentiality]
\label{prop:randomization-preserves-exact-potential}
Suppose Assumption~\ref{ass:pure-separation} holds. 
Then $\Phi^R$ is an exact potential function for the randomized stopping game. 
In particular, every maximizer of $\Phi^R$ is a Nash equilibrium of the randomized stopping
game, and
$$
\begin{aligned}
\arg\max\Phi^R
=
\Bigl\{
\boldsymbol p\in(\mathcal R)^N:
\boldsymbol{\Theta}^{p,u}
\in
\arg\max\Phi
\text{ for a.e.\ }u
\Bigr\}.
\end{aligned}
$$
\end{corollary}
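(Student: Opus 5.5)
The plan is to assemble the corollary from three earlier results, with essentially no new computation.

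\emph{Step 1: exact potentiality of $\Phi^R$.} By Proposition~\ref{prop:pure-exact-potential}, $\Phi(\boldsymbol\tau)=\mathbb E[\mathcal H(\omega,\boldsymbol\tau)]$ is an exact potential ($\alpha=0$) for the pure game. Applying Proposition~\ref{prop:random_preserves_potential} with $\alpha=0$ shows that the canonical randomized extension~\eqref{eq:randomized_potential} is an exact potential for the randomized game. It then remains to check that this extension coincides with the $\Phi^R$ defined just before the corollary. This follows from Fubini's theorem on $\overline\Omega=\Omega\times[0,1]^N$:
\[
\overline{\mathbb E}\bigl[\mathcal H(\omega,\boldsymbol\Theta^{p})\bigr]=\int_{[0,1]^N}\mathbb E\bigl[\mathcal H(\omega,\Theta^{1,p^1}(\cdot,u_1),\ldots,\Theta^{N,p^N}(\cdot,u_N))\bigr]\,du .
\]
Fubini applies for the following reasons. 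Joint measurability of $\mathcal H$ together with measurability of $(\omega,u_i)\mapsto\Theta^{i,p^i}(\omega,u_i)$ makes the integrand jointly measurable. Each section $\Theta^{i,p^i}(\cdot,u_i)$ is an $\mathbb F$-stopping time: indeed $\{\Theta^{i,p^i}(\cdot,u_i)\le t\}=\bigcap_n\{p^i_{t+1/n}>u_i\}\in\mathcal F_{t+}=\mathcal F_t$ by the usual conditions. Hence the integrability part of Assumption~\ref{ass:pure-separation} applies for each fixed $u$.

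The step I expect to be the only real obstacle is integrability for Fubini. The assumption gives finiteness of $\mathbb E|\mathcal H(\omega,\boldsymbol\tau)|$ for each pure profile $\boldsymbol\tau$, but not necessarily uniformly in $u$. I would handle this exactly as the paper implicitly does in Proposition~\ref{prop:random_preserves_potential}: use the standing bound $\sup_{\boldsymbol\tau}\mathbb E|G_i(\boldsymbol\tau)|<\infty$ of Section~\ref{sec:original_stopping_game}. The potential differences are $u$-wise differences of $G_i$, so integrating them is legitimate. Alternatively, I would interpret the integrability in the separation assumption as uniform over $\mathcal T_0^N$.

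\emph{Step 2: maximizers are Nash equilibria.} With $\alpha=0$, the proposition on potential maximizers and approximate equilibria, applied with $\varepsilon=0$ on $\mathcal A=\mathcal R^N$, shows that any maximizer of $\Phi^R$ is a $0$-Nash equilibrium of~\eqref{eq:randomized_game}.

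\emph{Step 3: description of $\arg\max\Phi^R$.} This is precisely the second assertion of Proposition~\ref{prop:equivalence-potential}, applied to $\Phi$ and its extension $\Phi^R$. If $\Phi^R(p)=\sup\Phi$, the nonnegative integrand $M-\Phi(\Theta^{p,u})$ vanishes for a.e.\ $u$. Conversely, if $\Theta^{p,u}\in\arg\max\Phi$ for a.e.\ $u$, integrating gives $\Phi^R(p)=M=\sup\Phi^R$. This yields the displayed set identity. The identity also holds trivially when $\arg\max\Phi=\emptyset$: both sides are then empty, since $\Phi^R(p)=M$ would force $\Theta^{p,u}\in\arg\max\Phi$ for some $u$.
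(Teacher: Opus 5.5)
Your proposal is correct and is essentially the paper's argument: the paper obtains the corollary directly from Proposition~\ref{prop:random_preserves_potential} with $\alpha=0$, applied to the exact potential of Proposition~\ref{prop:pure-exact-potential}, and the maximizer and $\arg\max$ statements are exactly the potential-maximizer proposition and Proposition~\ref{prop:equivalence-potential}, as in your Steps 1--3. The integrability concern you flag can be closed without strengthening the assumption: changing only $t_j$ changes $\mathcal H$ by the corresponding change in $G_j$, so moving from a reference profile $\boldsymbol\tau^0$ to $\boldsymbol\tau$ one coordinate at a time gives $\sup_{\boldsymbol\tau}\mathbb E|\mathcal H(\omega,\boldsymbol\tau)|\le \mathbb E|\mathcal H(\omega,\boldsymbol\tau^0)|+2\sum_{j}\sup_{\boldsymbol\sigma}\mathbb E|G_j(\boldsymbol\sigma)|<\infty$, and this uniform bound justifies your Fubini step.
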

\subsubsection{An example with explicit solution}
\label{sec:exact_potential_example}
Furthermore, we provide an example of an exact potential game with a closed-form solution.

Consider the $N$-player potential game with 
$$
J_i(\tau_1,\dots,\tau_N)
=
\dbE_x\!\left[e^{-\rho \tau_i}(X_{\tau_i}-K)^+\right]
+
\frac{\eta}{N-1}\sum_{j\neq i}
\dbE_x\!\left[e^{-\rho \tau_j}(X_{\tau_j}-K)^+\right],
\qquad i=1,\dots,N,
$$
where $\eta\ge 0$, $\rho>0$, $K>0$, and the state process is
$
X_t=x+\sigma B_t,
$
with $\sigma>0$ and $B$ a standard Brownian motion. Each player chooses a stopping time $\tau_i\in \cT$.

\begin{proposition}
The above stopping game is an exact potential game with potential function
$$
\Phi(\tau_1,\dots,\tau_N)
=
\sum_{i=1}^N
\dbE_x\!\left[e^{-\rho \tau_i}(X_{\tau_i}-K)^+\right].
$$
Moreover, a Nash equilibrium is given by
$$
\tau_1^*=\cdots=\tau_N^*=\tau^*,
\qquad
\tau^*:=\inf\{t\ge 0:\ X_t\ge x^*\},
$$
where the optimal stopping boundary is
$$
x^* = K + \frac{\sigma}{\sqrt{2\rho}}.
$$
The corresponding single-agent value function is
$$
V_{\mathrm{exact}}(x)
=
\sup_{\tau}\dbE_x\!\left[e^{-\rho \tau}(X_\tau-K)^+\right]
=
\begin{cases}
x-K, & x\ge x^*,\\[4pt]
(x^*-K)\exp\!\left(\dfrac{\sqrt{2\rho}}{\sigma}(x-x^*)\right), & x<x^*.
\end{cases}
$$
Hence the equilibrium potential value is
$$
\Phi^*(x)=N\,V_{\mathrm{exact}}(x).
$$
\end{proposition}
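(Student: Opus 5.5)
The plan has three parts: establish exact potentiality through the separation structure, reduce maximization of $\Phi$ to one scalar stopping problem, and solve that problem explicitly for Brownian motion. Set $g(t,\omega):=e^{-\rho t}(X_t-K)^+$ and $V_k(\tau):=\dbE_x[g(\tau_k)]$. Then
$$
J_i(\tau)=V_i(\tau)+\frac{\eta}{N-1}\sum_{j\neq i}V_j(\tau).
$$
This has the form of Assumption~\ref{ass:pure-separation} with common kernel $\mathcal H(\omega,\mathbf t)=\sum_k g(t_k,\omega)$ and residual
$$
\mathcal U_i=\frac{\eta}{N-1}\sum_{j\ne i}g(t_j)-\sum_{j\neq i}g(t_j),
$$
which does not depend on $t_i$. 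Integrability holds because $g\le |x|+\sigma\sup_t e^{-\rho t}|B_t|+K$, and this bound is integrable.

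With this, Proposition~\ref{prop:pure-exact-potential} gives directly that $\Phi=\sum_k V_k$ is an exact potential. Since $\Phi$ is a sum of terms that each depend on a single $\tau_k$, we get $\sup_{\mathcal T^N}\Phi=N\sup_{\tau\in\mathcal T}\dbE_x[g(\tau)]$. A common optimal $\tau^*$ for the scalar problem therefore maximizes $\Phi$, and so it is a Nash equilibrium; the maximizer proposition with $\alpha=\varepsilon=0$ gives this. It also follows that $\Phi^*(x)=N V_{\mathrm{exact}}(x)$.

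The remaining and main step is the single-agent problem $V(x)=\sup_\tau \dbE_x[e^{-\rho\tau}(X_\tau-K)^+]$, which I would handle by a guess-and-verify argument. The increasing solution of $\frac{\sigma^2}{2}v''=\rho v$ is $e^{\beta x}$ with $\beta=\sqrt{2\rho}/\sigma$. Take the candidate $v(x)=A e^{\beta x}$ for $x<x^*$ and $v(x)=x-K$ for $x\ge x^*$. Value matching gives $Ae^{\beta x^*}=x^*-K$, and smooth fit gives $\beta(x^*-K)=1$, so $x^*=K+\sigma/\sqrt{2\rho}$, which produces the displayed formula.

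For verification I need four properties:
\begin{itemize}
\item[(i)] $v\ge (x-K)^+$. Convexity of $v$ on $(-\infty,x^*)$ together with tangency at $x^*$ puts $v$ above the line $x-K$ there, and $v>0$.
\item[(ii)] $\frac{\sigma^2}{2}v''-\rho v\le 0$ on $x>x^*$. There it equals $-\rho(x-K)<0$ because $x^*>K$.
\item[(iii)] $v\in C^1$ and piecewise $C^2$, so that the Itô–Tanaka formula (generalized Itô formula) applies.
\item[(iv)] Localization: $e^{-\rho t}v(X_t)$ is a supermartingale, and $e^{-\rho(t\wedge\tau^*)}v(X_{t\wedge\tau^*})$ is a bounded martingale because $v\le x^*-K$ before $\tau^*$.
\end{itemize}
From (i)–(iv), $\dbE_x[g(\tau)]\le v(x)$ for every $\tau$, with the convention that $g=0$ on $\{\tau=\infty\}$, which is justified by $e^{-\rho t}X_t^+\to0$ a.s. and in $L^1$ by dominated convergence. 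Equality holds at $\tau^*$, which is a.s. finite since Brownian motion reaches every level, using $v(X_{\tau^*})=x^*-K$.

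The main technical care goes into the uniform integrability needed to pass $t\to\infty$ in the supermartingale inequality for a general $\tau$. I would first use Fatou together with the bound $\sup_t e^{-\rho t}|X_t|\in L^1$.
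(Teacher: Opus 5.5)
Your proposal is correct and follows essentially the same route as the paper. The paper checks directly that the interaction term $\frac{\eta}{N-1}\sum_{j\neq i}F(\tau_j)$ cancels under unilateral deviations, which is exactly your separation-structure argument with residual $\mathcal U_i$. It then solves the single-agent problem with the same candidate $v$, verified through the supermartingale property of $e^{-\rho t}v(X_t)$ and the martingale property up to $\tau^*$; your stopped process is just $(x^*-K)$ times the paper's exponential martingale $\exp(\lambda(X_t-x^*)-\rho t)$. Your derivation of $x^*$ by smooth fit and the Fatou/integrability argument for $t\to\infty$ simply make explicit steps the paper leaves implicit.
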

\begin{proof}
For each stopping time $\tau$, define the single-agent payoff
$$
F(\tau)
:=
\mathbb E_x\left[e^{-\rho \tau}(X_\tau-K)^+\right].
$$
Then the payoff of player $i$ can be written as
$
J_i(\tau_1,\ldots,\tau_N)
=
F(\tau_i)
+
\frac{\eta}{N-1}\sum_{j\ne i}F(\tau_j).
$
Let player $i$ deviate unilaterally from $\tau_i$ to $\tau_i'$, while the stopping times of the other players are fixed. Since the interaction term
$
\frac{\eta}{N-1}\sum_{j\ne i}F(\tau_j)
$
does not depend on $\tau_i$, we have
$
J_i(\tau_i',\tau_{-i})-J_i(\tau_i,\tau_{-i})
=
F(\tau_i')-F(\tau_i).
$    
Meanwhile, for
$
\Phi(\tau_1,\ldots,\tau_N)
:=
\sum_{k=1}^N F(\tau_k),
$
we also have
$
\Phi(\tau_i',\tau_{-i})-\Phi(\tau_i,\tau_{-i})
=
F(\tau_i')-F(\tau_i).
$
Therefore,
$$
J_i(\tau_i',\tau_{-i})-J_i(\tau_i,\tau_{-i})
=
\Phi(\tau_i',\tau_{-i})-\Phi(\tau_i,\tau_{-i}),
$$
for every $i\in[N]$ and every unilateral deviation. Hence the game is an exact potential game with potential function $\Phi$.

To solve the single-agent optimal stopping problem, we first define a candidate solution, and then verify optimality. Let
$$
v(x)
:=
\begin{cases}
x-K, & x\ge x^*,\\[4pt]
(x^*-K)\exp\left(\lambda(x-x^*)\right), & x<x^*,
\end{cases}
$$
where $x^*:=K+\frac{\sigma}{\sqrt{2\rho}}$. We will show that $v(x)=V_{\mathrm{exact}}(x)$.

First, $v$ is nonnegative, $C^1$, and twice differentiable away from $x^*$. Moreover,
$$
v(x)\ge (x-K)^+,
\qquad x\in\mathbb R.
$$
Furthermore, following direct calculations, we can see that in the continuation region $x<x^*$,
$$
\frac{\sigma^2}{2}v''(x)-\rho v(x)=0,
$$
while in the stopping region $x>x^*$,
$$
\frac{\sigma^2}{2}v''(x)-\rho v(x)
=
-\rho(x-K)
\le 0.
$$
Applying Itô's formula to $e^{-\rho t}v(X_t)$ and localizing if necessary, we obtain that
$
\left(e^{-\rho t}v(X_t)\right)_{t\ge 0}
$
is a supermartingale. Therefore, for every stopping time $\tau$,
$$
v(x)
\ge
\mathbb E_x\left[e^{-\rho \tau}v(X_\tau)\right]
\ge
\mathbb E_x\left[e^{-\rho \tau}(X_\tau-K)^+\right].
$$
Taking the supremum over $\tau$ gives
$$
v(x)\ge V_{\mathrm{exact}}(x).
$$
Now define
$$
\tau^*
:=
\inf\{t\ge 0:X_t\ge x^*\}.
$$
If $x\ge x^*$, then $\tau^*=0$ and
$$
\mathbb E_x\left[e^{-\rho\tau^*}(X_{\tau^*}-K)^+\right]
=
x-K
=
v(x).
$$
If $x<x^*$, then $X_{\tau^*}=x^*$ and
$$
\mathbb E_x\left[e^{-\rho\tau^*}(X_{\tau^*}-K)^+\right]
=
(x^*-K)\mathbb E_x\left[e^{-\rho\tau^*}\right].
$$
Since $\lambda=\sqrt{2\rho}/\sigma$, the process
$
\exp\left(\lambda(X_t-x^*)-\rho t\right)
$
is a martingale. Optional stopping at $\tau^*$ gives
$$
\mathbb E_x\left[e^{-\rho\tau^*}\right]
=
\exp\left(\lambda(x-x^*)\right).
$$
Therefore,
$$
\mathbb E_x\left[e^{-\rho\tau^*}(X_{\tau^*}-K)^+\right]
=
(x^*-K)\exp\left(\lambda(x-x^*)\right)
=
v(x).
$$
Thus $\tau^*$ is optimal and $V_{\mathrm{exact}}=v$.

Finally, since player $i$'s best-response problem is
$$
\sup_{\tau_i} J_i(\tau_i,\tau_{-i})
=
\sup_{\tau_i}
\left[
F(\tau_i)
+
\frac{\eta}{N-1}\sum_{j\ne i}F(\tau_j)
\right],
$$
and the second term is independent of $\tau_i$, every player has the same best response $\tau^*$. Hence
$
\tau_1^*=\cdots=\tau_N^*=\tau^*
$
is a Nash equilibrium, which yields
$
\Phi^*(x)
=
\sum_{i=1}^N F(\tau^*)
=
N V_{\mathrm{exact}}(x).
$
\end{proof}

\section{Stopping games with stopped-mass interactions}
\label{sec:stopped_mass_local_representation}
We now specify the general formulation of  stopping games to games with stopped-mass interactions, which will be analyzed in detail under the $\alpha$-potential framework.

For every player $i\in[N]$, let
$$
h_i:
\mathbb R^d\times[0,\infty)\times\{0,1\}^{N-1}
\to\mathbb R
$$
be a measurable reward function.  Recall that $\mathbb F$ is generated from a continuous 
 exogenous state process
$
X=(X_t)_{t\ge 0}$. For a pure stopping profile
$\tau\in\mathcal T^N$, define
\begin{align}
\label{eq:stopped_mass_game}
    J_i(\tau)
:=
\mathbb E
\left[
h_i
\left(
X_{\tau^i},
\tau^i,
\left(
\mathbf 1_{\{\tau^j\leq\tau^i\}}
\right)_{j\neq i}
\right)
\right].
\tag{SMG}
\end{align}
For $q^{-i}=(q^j)_{j\neq i}\in[0,1]^{N-1}$, $a^{-i}=(a^j)_{j\neq i}\in \{0,1\}^{N-1}$, define the multi-affine
extension of $h_i$ by
\begin{align}
\label{eq: g_local_form}
    g_i^N(x,t,q^{-i})
:=
\sum_{a^{-i}\in\{0,1\}^{N-1}}
h_i(x,t,a^{-i})
\prod_{j\neq i}
(q^j)^{a^j}(1-q^j)^{1-a^j}.
\end{align}
\begin{proposition}
\label{prop:canonical_local_representation}
For the stopped-mass game~\eqref{eq:stopped_mass_game}, the canonical randomized payoff
admits the local representation
\begin{equation}
\label{eq:local_payoff}
    J_i^R(p)
=
\mathbb E
\left[
\int_{[0,\infty)}
g_i^N(X_t,t,p_t^{-i})\,dp_t^i
\right].
\end{equation}
\end{proposition}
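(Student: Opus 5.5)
The plan is to compute $J_i^R(p)=\overline{\mathbb E}[h_i(X_{\Theta^i},\Theta^i,(\mathbf 1_{\{\Theta^j\le\Theta^i\}})_{j\ne i})]$, with $\Theta^k:=\Theta^{k,p^k}(\omega,U^k)$, by conditioning on $\mathcal F$ (that is, fixing $\omega$) and using Fubini over $[0,1]^N$. For fixed $\omega$, the variables $U^1,\dots,U^N$ are independent uniforms under $\lambda^{\otimes N}$, so the $\Theta^k(\omega,\cdot)$ are independent random times. The law of $\Theta^k(\omega,\cdot)$ is the Lebesgue–Stieltjes measure $dp^k_t(\omega)$ on $[0,\infty)$. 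This holds because $\{u:\Theta^{k,p^k}(\omega,u)\le t\}=\{u<p^k_t(\omega)\}$ up to a null set, using right-continuity of $p^k$, together with the conventions $p_{0-}=0$ and $p_\infty=1$.

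First, integrate out the opponents. Condition on $U^i$, which determines $\Theta^i=t$. The indicators $A^j:=\mathbf 1_{\{\Theta^j\le t\}}$, $j\ne i$, are then independent Bernoulli variables with parameters $\lambda(\Theta^j\le t)=p^j_t(\omega)$. Hence
\[
\int h_i\bigl(X_t(\omega),t,(A^j)_{j\ne i}\bigr)\prod_{j\ne i}du_j=\sum_{a^{-i}}h_i(X_t(\omega),t,a^{-i})\prod_{j\ne i}(p^j_t)^{a^j}(1-p^j_t)^{1-a^j}=g_i^N(X_t(\omega),t,p_t^{-i}(\omega)).
\]
Second, integrate over $u_i$. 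Since the image of $\lambda$ under $u\mapsto\Theta^i(\omega,u)$ is $dp^i_t(\omega)$, the change-of-variables formula gives $\int_0^1 g_i^N(X_{\Theta^i},\Theta^i,p^{-i}_{\Theta^i})\,du_i=\int_{[0,\infty)}g_i^N(X_t,t,p^{-i}_t)\,dp^i_t$. Taking $\mathbb E$ then yields the stated local representation.

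Several technical points must be checked along the way. The first is joint measurability of $(\omega,u)\mapsto h_i(X_{\Theta^i},\Theta^i,\cdot)$, which is needed for Fubini. It follows from measurability of $\Theta^{k}$ in $(\omega,u)$ (progressivity of $p$) and continuity of $X$. The second is integrability. The assumed bound $\sup_\tau\mathbb E|G_i(\tau)|<\infty$ together with Tonelli on $|h_i|$ justifies the interchanges: each $u$-section is a pure profile, so the $u$-integral of $\mathbb E|\cdot|$ is finite. A cleaner alternative is to split $h_i=h_i^+-h_i^-$ and apply Tonelli to each part separately.

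The main subtlety, which I expect to be the real obstacle, is the tie convention at atoms. The payoff uses $\mathbf 1_{\{\Theta^j\le\Theta^i\}}$, so the Bernoulli parameter must be $p^j_t$ (closed interval, right-continuous value), not $p^j_{t-}$. This must be verified carefully at common jump times of $p^i$ and $p^j$. Conditional independence makes the event $\{\Theta^j\le t\}$ have probability exactly $p^j_t$ even when $p^j$ jumps at $t$, so simultaneous stopping counts as "already stopped". Finally, the possibility $\Theta^i=\infty$ occurs only for $u_i$ in a null set since $p_\infty=1$ (and at $u=1$, which is immaterial), so it does not contribute.
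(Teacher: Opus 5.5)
Your proposal is correct and follows essentially the same route as the paper. Both arguments condition on $\mathbb F$ and on $U^i$, identify the opponents' statuses $\mathbf 1_{\{\Theta^{j,p^j}\le \Theta^{i,p^i}\}}$ as conditionally independent Bernoulli variables with parameters $p_t^j$ to obtain $g_i^N(X_t,t,p_t^{-i})$, and then integrate against the conditional law $dp^i$ of $\Theta^{i,p^i}$. Your extra checks are details the paper leaves implicit, and you handle them correctly: joint measurability, integrability via the uniform bound over pure profiles plus Tonelli, and the identity $\lambda(\Theta^{j,p^j}\le t)=p_t^j$ even at atoms.
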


\begin{proof}
Conditionally on $\mathbb F$ and on the private randomization
$U^i$ of player $i$, set
$
t:=\Theta^{i,p^i}$. For every $j\neq i$,
$$
\mathbf 1_{\{\Theta^{j,p^j}\leq t\}}
=
\mathbf 1_{\{U^j<p_t^j\}}.
$$
The variables on the right-hand side are conditionally independent
Bernoulli random variables with parameters $(p_t^j)_{j\neq i}$.
Therefore,
$$
\begin{aligned}
&
\mathbb E_{\overline{\mathbb P}}
\left[
h_i
\left(
X_t,t,
\left(
\mathbf 1_{\{\Theta^{j,p^j}\leq t\}}
\right)_{j\neq i}
\right)
\Bigm|
\mathbb F,U^i
\right]
=
g_i^N(X_t,t,p_t^{-i}).
\end{aligned}
$$

Finally, conditionally on $\mathbb F$, the distribution of
$\Theta^{i,p^i}$ is the Stieltjes measure $dp^i$, which gives the
claimed representation.
\end{proof}
\begin{remark}[Filtration and stopped-status interactions]
\label{rem:filtration_stopped_status}

The payoff~\eqref{eq:stopped_mass_game} does not require a change
of the admissible filtration in the open-loop formulation considered
here. 

For a pure stopping profile
$\tau\in\mathcal T^N$, the process
$
N_t^j:=\mathbf 1_{\{\tau^j\leq t\}}$
is $\mathbb F$-adapted and càdlàg. Consequently,
$N_{\tau^i}^j
=
\mathbf 1_{\{\tau^j\leq\tau^i\}}$
is $\mathcal F_{\tau^i}$-measurable. Thus the payoff
$h_i(
X_{\tau^i},
\tau^i,
(\mathbf 1_{\{\tau^j\leq\tau^i\}}
)_{j\neq i})$
is well defined under the original filtration.

This information structure means that the realized stopped-status
profile affects the payoff, but the cumulative rule $p^i$ cannot be
revised in response to the other players' private randomizations.
If players are instead allowed to observe and react to the realized
stopping decisions of the other players, the admissible filtration
must be enlarged to ensure measurability. 
\end{remark}

\subsection{$\alpha$-potential function on the randomized
strategy space}

\label{subsec:randomized_alpha_potential}
We now construct an $\alpha$-potential function  introduced in~\cite{GuoLiZhang2025}.

For a functional $F:\mathcal R^N\to\mathbb R$, its linear
derivative with respect to the action of player $i$ is a map
$$
\frac{\delta F}{\delta p^i}:
\mathcal R^N
\times
\operatorname{span}(\mathcal R)
\to
\mathbb R
$$
such that, for every $p\in\mathcal R^N$ and
$q^i\in\mathcal R$,

$$
\lim_{\varepsilon\downarrow0}
\frac{
F(p^i+\varepsilon(q^i-p^i),p^{-i})-F(p)
}{
\varepsilon
}
=
\frac{\delta F}{\delta p^i}
\left(
p;q^i-p^i
\right).
$$

Assuming that differentiation may be interchanged with the expectation
and the Stieltjes integral, then in~\eqref{eq:local_payoff}, we have
$$
\frac{\delta J_i^R}{\delta p^i}
\left(
p;\eta^i
\right)
=
\mathbb E
\left[
\int_{[0,\infty)}
g_i^N(X_t,t,p_t^{-i})\,d\eta_t^i
\right].
$$
If $j\neq i$, then
$$
\frac{\delta^2 J_i^R}
{\delta p^i\delta p^j}
\left(
p;\eta^i,\eta^j
\right)
=
\mathbb E
\left[
\int_{[0,\infty)}
\partial_j g_i^N(X_t,t,p_t^{-i})
\eta_t^j\,d\eta_t^i
\right],
$$
where $\partial_j g_i^N$ denotes the derivative with respect to the
coordinate of player $j$. Since $g_i^N$ does not depend on $p^i$,
$$
\frac{\delta^2 J_i^R}
{\delta p^i\delta p^i}
=0.
$$
Fix a deterministic base profile
$
z=(z^1,\ldots,z^N)\in\mathcal R^N$.
Define
$$
\Phi_z^R(p)
:=
\int_0^1
\sum_{i=1}^N
\frac{\delta J_i^R}{\delta p^i}
\left(
z+r(p-z);p^i-z^i
\right)
dr.
$$
For the local randomized stopping game, this becomes
\begin{align}
\label{eq:local_potential}
    \Phi_z^R(p)
=
\int_0^1
\sum_{i=1}^N
\mathbb E
\left[
\int_{[0,\infty)}
g_i^N
\left(
X_t,t,
\left\{
z_t+r(p_t-z_t)
\right\}^{-i}
\right)
d(p_t^i-z_t^i)
\right]
dr.
\end{align}
\begin{remark}[Choice of the base point]
\label{rem:base_point_zero}
 A natural base point  is the pure rule that "never stops at finite time". To define a base point corresponding to never stopping, we compactify the
time domain as
$
\overline{\mathbb R}_+ := [0,\infty].
$
Let $\overline{\mathcal T}$ denote the set of
$\overline{\mathbb R}_+$-valued stopping times, where
$\tau=\infty$ is interpreted as never stopping. We assign zero payoff when
player $i$ never stops.

We introduce the ambient randomized-strategy space
$$
\overline{\mathcal R}
:=
\left\{
p:[0,\infty)\times\Omega\to[0,1]:
p\text{ is adapted, càdlàg, nondecreasing, and }p_{0-}=0
\right\}.
$$
For $p\in\overline{\mathcal R}$, define
$
p_{\infty-}:=\lim_{t\to\infty}p_t,
$
and associate with $p$ the random probability measure $\mu^p$ on
$\overline{\mathbb R}_+$ given by
$$
\mu^p([0,t]):=p_t,\quad \forall\,t<\infty,\qquad
\mu^p(\{\infty\}):=1-p_{\infty-}.
$$

The original randomized-strategy space is the subset
$
\mathcal R
=
\left\{
p\in\overline{\mathcal R}:p_{\infty-}=1
\right\}.
$ Let $\tau^\infty\equiv\infty$ and define its canonical embedding
$z^\infty:=\iota(\tau^\infty)$. Thus,
$$
z_t^\infty=0,\quad \forall\,t<\infty,\qquad
\mu^{z^\infty}=\delta_\infty.
$$
For every $p\in\overline{\mathcal R}$ and $r\in[0,1]$,
$$
\mu^{z^\infty+r(p-z^\infty)}
=
(1-r)\delta_\infty+r\mu^p,
$$
so that, at each finite time,
$$
\left(z^\infty+r(p-z^\infty)\right)_t=rp_t.
$$

Extend the marginal reward by setting its value at the cemetery time equal
to zero. Then the line-integral potential based at $z^\infty$ is
$$
\Phi_{z^\infty}^R(p)
=
\int_0^1
\sum_{i=1}^N
\mathbb E
\left[
\int_{\overline{\mathbb R}_+}
\overline g_i^N
\left(
X_t,t,
\left\{
z_t^\infty+r(p_t-z_t^\infty)
\right\}^{-i}
\right)
\left(\mu^{p^i}-\delta_\infty\right)(dt)
\right]
dr.
$$
Since the reward at $\infty$ is zero, this reduces to
$$
\Phi_{z^\infty}^R(p)
=
\sum_{i=1}^N
\mathbb E
\left[
\int_{[0,\infty)}
G_i(X_t,t,p_t^{-i})\,dp_t^i
\right],
$$
where
$$
G_i(x,t,q^{-i})
:=
\int_0^1 g_i^N(x,t,rq^{-i})\,dr.
$$
We construct the potential on $\overline{\mathcal R}^N$ and then, when
desired, restrict it to the original strategy space $\mathcal R^N$.
\end{remark}

For $i,j\in [N]$ and every $(x,t,q)\in \mathbb R^d\times [0,\infty)\times \mathcal R^N$, define
\begin{align}
\label{eq:a_ij}
    a_{ij}^N(x,t,q)
:=
\partial_{q^j}g_i^N(x,t,q^{-i}),
\end{align}
and for every $p\in \mathcal R^N$,
$$
\begin{aligned}
\mathfrak C_{ij}^N
\left(
p;q^i,q^j
\right)
:=
\mathbb E\left[
\int_{[0,\infty)}
a_{ij}^N(X_t,t,p_t)
q_t^j\,dq_t^i
\right]-
\mathbb E\left[
\int_{[0,\infty)}
a_{ji}^N(X_t,t,p_t)
q_t^i\,dq_t^j
\right].
\end{aligned}
$$
Set
$$
\Gamma_N
:=
\sup_{\substack{
i\in[N],\\
p,q\in\mathcal R^N
}}
\sum_{j\neq i}
\left|
\mathfrak C_{ij}^N
\left(
p;q^i,q^j
\right)
\right|.
$$
Recall the definition of an  $\alpha$-potential function in Definition~\ref{def:alpha_potential}. The following theorem follows \cite{GuoLiZhang2025}.
\begin{proposition}
[$\alpha_N$-potential function]
\label{prop:local_alpha_estimate}
$\Phi_z^R$ is an $\alpha_N$-potential function for the randomized
stopping game with
$$
\alpha_N\leq 2\Gamma_N.
$$
Consequently, if $p^\varepsilon$ satisfies
$$
\Phi_z^R(p^\varepsilon)
\geq
\sup_{p\in\mathcal R^N}\Phi_z^R(p)-\varepsilon,
$$
then $p^\varepsilon$ is an
$(\alpha_N+\varepsilon)$-Nash equilibrium.
\end{proposition}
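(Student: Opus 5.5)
The plan is to follow the line-integral argument of \cite{GuoLiZhang2025}, specialized to the local representation of Proposition~\ref{prop:canonical_local_representation}. Fix $i$, a profile $p$, and a deviation $\widetilde p^i$, and set $\widetilde p:=(\widetilde p^i,p^{-i})$. The target is the bound
$$
\bigl|[J_i^R(\widetilde p)-J_i^R(p)]-[\Phi_z^R(\widetilde p)-\Phi_z^R(p)]\bigr|\le 2\Gamma_N .
$$
The second claim (near-maximizers are $(\alpha_N+\varepsilon)$-equilibria) then follows from the earlier proposition on potential maximizers.

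\textbf{Step 1: differentiate the potential in direction $i$.} Take $p^s:=(p^i+s(\widetilde p^i-p^i),p^{-i})$ for $s\in[0,1]$. This path stays in $\mathcal R^N$ by convexity. Write both differences as integrals over $s$ of the directional derivatives along the path:
$$
J_i^R(\widetilde p)-J_i^R(p)=\int_0^1\frac{\delta J_i^R}{\delta p^i}(p^s;\widetilde p^i-p^i)\,ds .
$$
The same identity holds for $\Phi_z^R$. For $J_i^R$ this is exact, since $g_i^N$ does not depend on $p^i$. Hence $J_i^R$ is affine in $p^i$, and the derivative $\mathbb E\int g_i^N(X_t,t,p_t^{-i})\,d(\widetilde p^i-p^i)_t$ does not depend on $s$.

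\textbf{Step 2: compute $\delta\Phi_z^R/\delta p^i$ from \eqref{eq:local_potential}.} Differentiate under the $r$-integral, assuming the interchange as in the paper. Two kinds of terms appear.
\begin{itemize}
\item The term $k=i$ gives $\int_0^1 \mathbb E\int g_i^N(\cdot,(z+r(p-z))^{-i})\,d\eta^i\,dr$, where $\eta^i=\widetilde p^i-p^i$.
\item The terms $k\neq i$ give $\int_0^1 r\,\mathbb E\int a_{ki}^N(\cdot,z+r(p-z))\,\eta^i_t\,d(p^k-z^k)_t\,dr$, using \eqref{eq:a_ij}.
\end{itemize}
Next, write $g_i^N$ at $p$ as $g_i^N$ at $z+r(p-z)$ plus the integral in $r$ of $\sum_{j\neq i}a_{ij}^N(\cdot)(p^j-z^j)$. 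The standard integration-by-parts in $r$ gives $\int_0^1 [f(r)+rf'(r)]\,dr=f(1)$. Matching terms, the discrepancy between the two derivatives reduces to
$$
\int_0^1 r\sum_{j\neq i}\Bigl(\mathbb E\int a_{ij}^N(\cdot,y^r)\,(p^j-z^j)_t\,d\eta^i_t-\mathbb E\int a_{ji}^N(\cdot,y^r)\,\eta^i_t\,d(p^j-z^j)_t\Bigr)\,dr,
$$
with $y^r=z+r(p-z)$. This is exactly a sum of commutators $\mathfrak C_{ij}^N(y^r;\eta^i,p^j-z^j)$.

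\textbf{Step 3: bound the commutators.} Each $\mathfrak C_{ij}^N$ is bilinear in its last two arguments. The directions $\eta^i$ and $p^j-z^j$ are differences of elements of $\mathcal R$, so each commutator splits into at most four terms with arguments in $\mathcal R$. To keep the constant at $2$ rather than $4$, expand only in $\eta^i=\widetilde p^i-p^i$: since $p^j-z^j$ pairs symmetrically with base point terms, I would normalize by choosing to write $p^j-z^j$ via $z$ deterministic, which should cost one factor. Combining $\int_0^1 r\,dr\le 1$, $s\in[0,1]$, and the definition of $\Gamma_N$ as a supremum over $p,q\in\mathcal R^N$ then yields $\alpha_N\le 2\Gamma_N$.

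\textbf{Main obstacle.} The delicate part is Step 2. It requires justifying differentiation under the expectation and the Stieltjes integral. It also requires the product rule $d(\eta^i\eta^j)$ when jumps are present, since this is where atoms at common times could produce extra terms $\Delta\eta^i\Delta\eta^j$. The commutator definition handles these implicitly. The second source of difficulty is bookkeeping the bilinear splitting so that the constant is $2$. If the clean factor $2$ does not come out, I would first try to absorb the splitting into the supremum by noting that $\mathfrak C_{ij}^N$ is evaluated at differences of elements of $\mathcal R$, whose variation is bounded by $1$ in each coordinate.
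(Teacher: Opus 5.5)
Your Steps 1 and 2 are correct, and they reproduce the line-integral computation of \cite{GuoLiZhang2025}; the paper gives no proof beyond that citation. Along $p^s=(p^i+s\eta^i,p^{-i})$, your Step 2 gives the exact identity
\[
\bigl[J_i^R(\widetilde p)-J_i^R(p)\bigr]-\bigl[\Phi_z^R(\widetilde p)-\Phi_z^R(p)\bigr]
=\int_0^1\!\!\int_0^1 r\sum_{j\neq i}\mathfrak C_{ij}^N\bigl(y^{r,s};\eta^i,p^j-z^j\bigr)\,dr\,ds,
\qquad y^{r,s}:=z+r(p^s-z).
\]
Here $\mathfrak C_{ij}^N$ is extended to differences by bilinearity in its last two slots, and $y^{r,s}\in\mathcal R^N$ because it is a convex combination of $z$ and $p^s$. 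You need the latter to invoke $\Gamma_N$, so say it explicitly. The obstacle you flag (product rules and common jumps) does not arise. The argument differentiates $g_k^N$ pointwise in $t$ and uses only the scalar identity $\int_0^1[f(r)+rf'(r)]\,dr=f(1)$; there is no integration by parts in time.

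The gap is Step 3, where the constant is produced. $\Gamma_N$ is a supremum over $q\in\mathcal R^N$, so both directions must be elements of $\mathcal R$. If you expand only in $\eta^i$, the second slot still holds $p^j-z^j$, which is not in $\mathcal R$, and $\Gamma_N$ does not control $\sum_{j\neq i}|\mathfrak C_{ij}^N(y;\widetilde p^i,p^j-z^j)|$. Writing $p^j-z^j$ ``via $z$ deterministic'' does not change this for a general base point $z\in\mathcal R^N$. With your crude weight $\int_0^1 r\,dr\le 1$, the honest count is $4\Gamma_N$, not $2\Gamma_N$.

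The fix is to expand bilinearly in both slots:
\[
\mathfrak C_{ij}^N(y;\widetilde p^i-p^i,p^j-z^j)
=\mathfrak C_{ij}^N(y;\widetilde p^i,p^j)-\mathfrak C_{ij}^N(y;\widetilde p^i,z^j)
-\mathfrak C_{ij}^N(y;p^i,p^j)+\mathfrak C_{ij}^N(y;p^i,z^j).
\]
Each of the four resulting sums over $j\neq i$ has the form $\sum_{j\neq i}|\mathfrak C_{ij}^N(y;q^i,q^j)|$ for a single profile $q\in\{(\widetilde p^i,p^{-i}),(\widetilde p^i,z^{-i}),(p^i,p^{-i}),(p^i,z^{-i})\}\subset\mathcal R^N$. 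One fixed $q$ for all $j$ is what the definition of $\Gamma_N$ requires. Hence each sum is at most $\Gamma_N$, giving $\sum_{j\neq i}|\mathfrak C_{ij}^N(y^{r,s};\eta^i,p^j-z^j)|\le 4\Gamma_N$. The factor $2$ then comes from the exact weight $\int_0^1\!\int_0^1 r\,dr\,ds=\tfrac12$, which your bound $\int_0^1 r\,dr\le1$ throws away. With this correction, and the earlier proposition on potential maximizers for the second claim, your argument proves the statement.
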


Next we provide some sufficient conditions such that $\Gamma_N=O(N^{-1})$. Recall that $g^N$ admits the representation~\eqref{eq: g_local_form}. For $i,j\in [N]$ and $a^{-i,j}=(a^k)_{k\neq i,j}\in\{0,1\}^{N-2}$, define
$$
\Delta_j h_i^N(x,t,a^{-i,j})
:=
h_i^N(x,t,1,a^{-i,j})-
h_i^N(x,t,0,a^{-i,j}),\qquad L_{ij}^N
:=
\sup_{\substack{
x\in\mathbb R^d,\,
t\ge 0,\\
a^{-i,j}\in\{0,1\}^{N-2}
}}
\left|
\Delta_jh_i^N(x,t,a^{-i,j})
\right|.
$$
\begin{proposition}
\label{prop:local_influence_bound}
For every integer $N\ge 2$,
$$
\alpha_N
\leq
2
\max_{i\in[N]}
\sum_{j\neq i}
\left(
L_{ij}^N+L_{ji}^N
\right).
$$
\end{proposition}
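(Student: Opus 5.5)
The plan is to combine Proposition~\ref{prop:local_alpha_estimate}, which gives $\alpha_N\le 2\Gamma_N$, with a pointwise bound on each pairwise asymmetry term $\mathfrak C_{ij}^N$. It therefore suffices to show that, for every $i$, every $p,q\in\mathcal R^N$ and every $j\neq i$,
$$
\left|\mathfrak C_{ij}^N(p;q^i,q^j)\right|\le L_{ij}^N+L_{ji}^N .
$$
Summing over $j\neq i$ and taking the maximum over $i$ then yields the claim.

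\textbf{Step 1: bound the coefficients $a_{ij}^N$.} Differentiate the multi-affine extension~\eqref{eq: g_local_form} with respect to $q^j$. Since $g_i^N$ is affine in $q^j$, the derivative is the difference of its values at $q^j=1$ and $q^j=0$:
$$
a_{ij}^N(x,t,q)=\sum_{a^{-i,j}\in\{0,1\}^{N-2}}\Delta_j h_i^N(x,t,a^{-i,j})\prod_{k\neq i,j}(q^k)^{a^k}(1-q^k)^{1-a^k}.
$$
For $q\in[0,1]^{N-1}$ the product weights are nonnegative and sum to one, so this is a convex combination. Hence $|a_{ij}^N(x,t,q)|\le L_{ij}^N$ uniformly in $(x,t,q)$, and likewise $|a_{ji}^N|\le L_{ji}^N$.

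\textbf{Step 2: bound each Stieltjes term.} For $q\in\mathcal R$ we have $0\le q_t^j\le 1$, and $dq^i$ is a nonnegative measure of total mass at most $1$ (exactly $1$ on $\mathcal R$, at most $1$ on $\overline{\mathcal R}$). Pathwise this gives
$$
\left|\int_{[0,\infty)} a_{ij}^N(X_t,t,p_t)\,q_t^j\,dq_t^i\right|\le L_{ij}^N \int_{[0,\infty)} dq_t^i\le L_{ij}^N .
$$
Taking expectations, and treating the symmetric term the same way, the triangle inequality gives $|\mathfrak C_{ij}^N|\le L_{ij}^N+L_{ji}^N$.

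The argument is essentially bookkeeping. The one point needing care is Step 1: we must check that the evaluation points $p_t$ lie in $[0,1]^{N-1}$, which holds since each $p^k$ is $[0,1]$-valued, so that the Bernoulli-product weights are genuine probabilities. Measurability and integrability of the integrands follow from the boundedness established in Step 1, so no additional assumption beyond finiteness of the $L_{ij}^N$ is needed; if some $L_{ij}^N=\infty$ the bound is trivial. A crude alternative bound of $2L_{ij}^N$ from expanding the multilinear sum term by term would lose the constant, so the convex-combination representation is the key observation to use.
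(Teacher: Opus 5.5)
Your proposal is correct and follows the paper's proof. You bound $|a_{ij}^N|\le L_{ij}^N$, then use $0\le q_t^j\le 1$ and the unit mass of $dq^i$ to get $|\mathfrak C_{ij}^N(p;q^i,q^j)|\le L_{ij}^N+L_{ji}^N$, and finally sum over $j\neq i$ and invoke Proposition~\ref{prop:local_alpha_estimate}; your convex-combination formula for $\partial_{q^j}g_i^N$ simply makes explicit the step the paper attributes to ``the definition of $a_{ij}$.''
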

\begin{proof}
    By the definition of $a_{ij}$~\eqref{eq:a_ij},
$ \left|
a_{ij}^N(x,t,q)
\right|
\leq L_{ij}^N$. Moreover, for every $q^i,q^j\in\mathcal R$,
$$
0\leq q_t^i,q_t^j\leq1,
\qquad
\int_0^\infty dq_t^i
=
\int_0^\infty dq_t^j
=
1.
$$
It follows that
$$
\left|
\mathfrak C_{ij}^N
\left(
p;q^i,q^j
\right)
\right|
\leq
L_{ij}^N+L_{ji}^N.
$$
Consequently, Proposition ~\ref{prop:local_alpha_estimate} gives
$$
\alpha_N
\leq
2
\max_{i\in[N]}
\sum_{j\neq i}
\left(
L_{ij}^N+L_{ji}^N
\right).
$$
\end{proof}
In particular, a sufficient condition for $\alpha_N=O(N^{-1})$ is
\begin{align}
    \label{eq:suff_assumption}
    \max_{i\in[N]}
\sum_{j\neq i}
\left(
L_{ij}^N+L_{ji}^N
\right)
\leq
\frac{C}{N}
\end{align}
for a constant $C$ independent of $N$. 
\begin{assumption}[Sufficient assumption] Condition~\ref{eq:suff_assumption} holds in either of the
following situations:
    \begin{enumerate}
    \item Each pairwise influence satisfies
    $$
    L_{ij}^N+L_{ji}^N
    \leq
    \frac{C}{N^2},
    \qquad
    i\neq j.
    $$
    This is a dense game with interactions of order $O(N^{-2})$.

    \item There exist sets of interacting neighbors
    $\mathcal N_i^N\subset[N]\setminus\{i\}$ such that
    $$
    \sup_{N\geq2}
    \max_{i\in[N]}
    |\mathcal N_i^N|
    \leq D,
    $$
    $L_{ij}^N=L_{ji}^N=0$ whenever
    $j\notin\mathcal N_i^N$, and
    $
    L_{ij}^N+L_{ji}^N
    \leq
    \frac{C}{N} $
    on the interaction graph. This is a sparse game with bounded
    degree.
\end{enumerate}
\end{assumption}

Next, we provide an example of $\alpha_N=O(N^{-1})$-potential  game.

\begin{example}
\label{example-1}    

Let
$
X_t=x_0+\sigma W_t,
t\ge0,
$
where $W$ is a standard Brownian motion. Fix constants
$
\rho>0,
K>0,
\kappa_1,\kappa_2\geq0,
$
with
$
\kappa_1+\kappa_2>0.
$
For a pure stopping profile
$\tau=(\tau^1,\ldots,\tau^N)\in{\mathcal T}^N$, define
$$
m_{-i}^{\tau}(t)
:=
\frac{1}{N-1}
\sum_{j\neq i}
\mathbf 1_{\{\tau^j\leq t\}}.
$$
The reward of player $i$ is
\begin{align}
\label{eq:weak_interaction_pure_game}
J_i^{N}(\tau)
:=
\mathbb E\left[
e^{-\rho\tau^i}
\left(
(X_{\tau^i}-K)^+
-
\frac{\kappa_1}{N}
m_{-i}^{\tau}(\tau^i)
-
\frac{\kappa_2}{N}
\left(
m_{-i}^{\tau}(\tau^i)
\right)^2
\right)
\right].
\end{align}
Notice that the payoff~\eqref{eq:weak_interaction_pure_game} is of the local
stopped-status form~\eqref{eq:stopped_mass_game}. More precisely, for
$a^{-i}=(a^j)_{j\neq i}\in\{0,1\}^{N-1}$, define
$
\overline a^{-i}
:=
\frac{1}{N-1}
\sum_{j\neq i}a^j
$
and
\begin{align}
\label{eq:weak_interaction_h}
h_i^N(x,t,a^{-i})
:=
e^{-\rho t}
\left[
(x-K)^+
-
\frac{\kappa_1}{N}\overline a^{-i}
-
\frac{\kappa_2}{N}
\left(
\overline a^{-i}
\right)^2
\right].
\end{align}
Then
$$
J_i^{N}(\tau)
=
\mathbb E\left[
h_i^N\left(
X_{\tau^i},
\tau^i,
\left(
\mathbf 1_{\{\tau^j\leq\tau^i\}}
\right)_{j\neq i}
\right)
\right].
$$
Next we consider the randomized payoff function defined in Section~\ref{sec: randomize_game}. For $q^{-i}=(q^j)_{j\neq i}\in[0,1]^{N-1}$, write
$$
m_{-i}(q)
:=
\frac{1}{N-1}
\sum_{j\neq i}q^j,\qquad
v_{-i}^N(q)
:=
\frac{1}{(N-1)^2}
\sum_{j\neq i}q^j(1-q^j).
$$
Let $(Y^j)_{j\neq i}$ denote independent Bernoulli random
variables with parameters $(q^j)_{j\neq i}$ and define
$$
M_{-i}^N
:=
\frac{1}{N-1}
\sum_{j\neq i}Y^j,
$$
then
$$
\mathbb E[M_{-i}^N]
=
m_{-i}(q),\qquad
\mathbb E\left[
\left(M_{-i}^N\right)^2
\right]
=
m_{-i}(q)^2+v_{-i}^N(q).
$$
Consequently, Proposition~\ref{prop:canonical_local_representation}
gives the canonical independently randomized payoff
\begin{align}
\label{eq:weak_interaction_randomized_game}
J_i^{R,N}(p)
=
\mathbb E\left[
\int_{[0,\infty)}
g_i^N(X_t,t,p_t^{-i})\,dp_t^i
\right],
\tag{SMG'}
\end{align}
where
\begin{align}
\label{eq:weak_interaction_local_kernel}
g_i^N(x,t,q^{-i})
:=
e^{-\rho t}
\left[
(x-K)^+
-
\frac{\kappa_1}{N}m_{-i}(q)
-
\frac{\kappa_2}{N}
\left(
m_{-i}(q)^2+v_{-i}^N(q)
\right)
\right].
\end{align}
Fix a base point
$
z=(z^1,\ldots,z^N)\in\mathcal R^N.
$
The associated potential functional is
\begin{align}
\label{eq:weak_interaction_line_integral}
\Phi_{N,z}^R(p)
=
\int_0^1
\sum_{i=1}^N
\mathbb E\Bigg[
\int_{[0,\infty)}
g_i^N\left(
X_t,t,
\left\{
z_t+r(p_t-z_t)
\right\}^{-i}
\right)
d(p_t^i-z_t^i)
\Bigg]dr.
\end{align}
Moreover, one can show that 
the functional $\Phi_{N,z}^R$ defined
in~\eqref{eq:weak_interaction_line_integral} is an
$\alpha_N$-potential function for the randomized stopping
game~\eqref{eq:weak_interaction_randomized_game}, where
$
\alpha_N
\leq
\frac{4}{N}
\left(
\kappa_1+
\frac{2N-3}{N-1}\kappa_2
\right)
\leq
\frac{4(\kappa_1+2\kappa_2)}{N}.
$
In particular,
$
\alpha_N=O(N^{-1}).
$

To see this, 
fix distinct players $i,j\in[N]$. For
$a^{-ij}=(a^k)_{k\notin\{i,j\}}\in\{0,1\}^{N-2}$, define
$
S^{-ij}(a)
:=
\sum_{k\notin\{i,j\}}a^k$. Let
$a^{-ij}\oplus_j b$
denote the opponent-status vector of player $i$ obtained by assigning
the status $b\in\{0,1\}$ to player $j$ and retaining the coordinates
$a^{-ij}$ for all players other than $i$ and $j$.
The discrete influence of player $j$ on player $i$ is
$$
\begin{aligned}
\Delta_jh_i^N(x,t,a^{-ij})
:={}&
h_i^N\left(
x,t,a^{-ij}\oplus_j1
\right)
-
h_i^N\left(
x,t,a^{-ij}\oplus_j0
\right)=
-\frac{e^{-\rho t}}{N}
\left[
\frac{\kappa_1}{N-1}
+
\frac{\kappa_2}{(N-1)^2}
\left(
2S^{-ij}(a)+1
\right)
\right].
\end{aligned}
$$
Since
$
0\leq S^{-ij}(a)\leq N-2$,
we obtain
$
L_{ij}^N
\leq
\frac{1}{N}
\left[
\frac{\kappa_1}{N-1}
+
\frac{(2N-3)\kappa_2}{(N-1)^2}
\right].
$
Since the game is homogeneous, and hence the same bound holds for
$L_{ji}^N$. Proposition~\ref{prop:local_influence_bound} therefore gives
$$
\begin{aligned}
\alpha_N
&\leq
2
\max_{i\in[N]}
\sum_{j\neq i}
\left(
L_{ij}^N+L_{ji}^N
\right)\leq
4(N-1)
\frac{1}{N}
\left[
\frac{\kappa_1}{N-1}
+
\frac{(2N-3)\kappa_2}{(N-1)^2}
\right]
\\
&=
\frac{4}{N}
\left(
\kappa_1+
\frac{2N-3}{N-1}\kappa_2
\right)
\leq
\frac{4(\kappa_1+2\kappa_2)}{N},
\end{aligned}
$$
Hence the $O(N^{-1})$ estimate.

\end{example}

\subsection{Analysis of the stopping game with stopped-mass interaction}
\label{sec:sing_control}

To derive the $\alpha$-NE for~\eqref{eq:stopped_mass_game}, given the construction of the potential function in Proposition~\ref{prop:canonical_local_representation}, we now focus on the following optimization problem:
$$ V_0 := \sup_{\bm{p} \in \cR^N} \phi(\bm{p}). $$
One crucial observation is that this can be seen as a multidimensional singular control problem. Indeed, for each $p \in \cR_T$, there exists a nondecreasing adapted process $\n$ taking its values in $[0,+\infty)$ such that:
$$ p_t = 1 - e^{-\n_t} \ \mbox{for all} \ t \in [0,\infty). $$
From~\eqref{eq:local_potential}, we may then formulate a dynamic version of the potential problem, which is:
\bea\label{dynamic-potential-problem}
V(t, x, \bm{p}) := \sup_{\bm{\n} = (\n^1, \dots, \n^N)} \sum_{i=1}^N \dbE\Big[\int_t^\infty G_i\Big(X_s, s, P_{s-}^{-i}\Big)dP_s^i \Big| X_t = x, \bm{P}_{t-} = \bm{p} \Big],
\eea
with $\bm{P} := (P_1, \dots, P_N)$, each $P^i_\cdot$ driven by the dynamics:
\begin{equation}\label{dynamics-P} \left\{\begin{array}{ll}
dP_s^i = (1-P_s^i)d\n_s^i \q \mbox{whenever} \ \n^i \ \mbox{is continuous in} \ s, \\
\Delta P_s^i = (1-p_i)(e^{-\n_{s-}^i} - e^{-\n_s^i}) \q \mbox{whenever} \ \Delta \n_s^i \neq 0, 
\end{array}\right.
\end{equation}
and 
$$ G(x, t, p) := \int_0^1 g(x, t, rp)dr,$$
where we chose the basepoint according to Remark \ref{rem:base_point_zero}. By Remark 3.3, $z_t^\infty=0$ at every finite time, while the signed base-point mass is concentrated at $\infty$, where the payoff is defined to be zero. Hence the line-integral potential reduces exactly to the displayed singular-control objective.

 In this section, we assume that $X$ satisfies the stochastic differential equation (SDE):
\begin{equation}\label{dynamics-X} 
dX_s = b(X_s)ds + \sigma(X_s)dW_s, 
\end{equation}
where $b$ and $\sigma$ are bounded, uniformly Lipschitz-continuous in their second variable, and $\sigma$ satisfies \ $\si \ge \si_0 > 0$ for some constant $\si_0$.

 We also drop the dependence of $G$ in the variable $t$, which leads to the homogeneous control problem:
\bea\label{dynamic-potential-problem-infinite}
V(x, \bm{p}) := \sup_{\bm{\n} = (\n^1, \dots, \n^N)} \sum_{i=1}^N \dbE\Big[\int_0^\infty e^{-\rho t}G_i\Big(X_t, P_{t-}^{-i}\Big)dP_t^i \Big| X_0 = x, \bm{P}_{0-} = \bm{p} \Big],
\eea
which we will prove to be characterized by the following system:
\bea\label{DPE-infinite-horizon}
\left\{\begin{array}{ll}
\min\Big(\rho u -\cL^x  u, \ \min_{i \in [N] : p_i < 1} \{ -\pa_{p_i} u - G_i \}  \Big) = 0, \ u|_{\bm{p} = \bm{1}} = 0, \quad \mbox{whenever} \ \cM u(x,\bp,\bp') > 0 \ \mbox{for all} \ \bp' > \bp, \\
\cM u(x,\bp,\bp') \ge 0 \ \mbox{for all} \ \bp' \ge \bp,
\end{array}\right.
\eea
where $\cL^x u := b \pa_x u + \frac{1}{2}\si^2 \pa_{xx}^2 u$, and 
$$\cM u(x, \bp, \bp') :=- (u(x,\bm{p'}) - u(x,\bm{p})) - \cG(x, \bm{p}) \cdot (\bm{p'} - \bm{p}),$$ $\cG(b,\bm{p}) := \big(G_1(x,\bm{p}), \dots, G_N(x, \bm{p}))$ (trivially extending the functions $(G_1, \dots, G_N)$ from functions defined on $\dbR \times [0,1]^{N-1}$ to functions defined on $\dbR \times [0,1]^N$), and the inequality $\bm{p'} \ge \bm{p}$ is understood coordinatewise.

\begin{remark}
    Since the reward functions \((G_i)_{i\in[N]}\) depend on the controlled state \(\bP\) and are integrated against the corresponding components \(P^i\), the local gradient constraints in \eqref{DPE-infinite-horizon} only control infinitesimal, and hence continuous, increases of the stopping masses. In general, they do not imply the monotonicity inequality required for finite jumps of \(\bP\). This is why the dynamic programming equation must be supplemented with the nonlocal constraint encoded by the operator \(\cM\). This local–nonlocal separation between continuous stopping and jumps is reminiscent of the dynamic programming structure arising  in mean-field optimal stopping \cite{TalbiTouziZhang2023, TalbiTouziZhang2023Viscosity}.
\end{remark}

\begin{definition}
Let $u : \dbR \times [0,1)^N \to \dbR$. \\
    \noindent {\rm (i)} We say $u$ is a viscosity supersolution of \eqref{DPE-infinite-horizon} if, for all $(x,\bp) \in \dbR \times [0,1)$, $\d>0$ and $\f \in C^{2,1}(\dbR \times [0,1)^N)$ s.t.\ $(\f - u)(x, \bp) = 0 = \max_{\cB_\d(x, \bp)} (\f-u)$, we have:
    $$ \min\Big(\rho \f -\cL^x  \f, \ \min_{i \in [N] : p_i < 1} \{ -\pa_{p_i} \f - G_i \}  \Big) \ge 0, $$
    and $\cM u(x, \bp, \bp') \ge 0$ for all $\bp' \ge \bp$. \\
    \noindent {\rm (ii)} We say $u$ is a viscosity subsolution of \eqref{DPE-infinite-horizon} if, for all $(x,\bp) \in \dbR \times [0,1)$, $\d>0$ and $\f \in C^{2,1}(\dbR \times [0,1)^N)$ s.t.\ $(\f - u)(x, \bp) = 0 = \min_{\cB_\d(x, \bp)} (\f-u)$, we have:
    $$ \min\Big(\rho \f -\cL^x  \f, \ \min_{i \in [N] : p_i < 1} \{ -\pa_{p_i} \f - G_i \}  \Big) \le 0$$
    whenever $\cM u(x, \bp, \bp') > 0$ for all $\bp' > \bp$. \\
    \noindent {\rm (iii)} We say $u$ is a viscosity solution of \eqref{DPE-infinite-horizon} if it is a viscosity supersolution and subsolution. 
\end{definition}
 
\begin{theorem}
\label{thm:viscosity}
Assume the functions $(G_i)_{i \in [N]}$ are uniformly bounded and Lipschitz-continuous in their two variables. Then $V$ is a continuous viscosity solution of \eqref{DPE-infinite-horizon}. 
\end{theorem}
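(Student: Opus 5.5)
We prove continuity first and then the two viscosity properties, all by dynamic programming for the singular-control problem \eqref{dynamic-potential-problem-infinite}.

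\textbf{Step 1: continuity.} Write the reward as a Stieltjes integral of $e^{-\rho t}G_i(X_t,P^{-i}_{t-})$ against $dP^i_t$. Since the total mass of each $dP^i$ is at most $1$ and $G_i$ is bounded, $V$ is bounded.

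\emph{Continuity in $x$.} Fix a control $\bm\nu$ and couple $X^x,X^{x'}$ on the same Brownian motion. Standard SDE estimates give $\mathbb E\sup_{s\le T}|X^x_s-X^{x'}_s|\le C_Te^{C T}|x-x'|$. The Lipschitz property of $G_i$ in $x$, together with the discount $e^{-\rho t}$ used to truncate at a large horizon $T$, yields a modulus of continuity in $x$ that is uniform over controls.

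\emph{Continuity in $\bm p$.} Use the multiplicative structure $1-P^i_t=(1-p_i)e^{-\nu^i_t}$. The same control $\bm\nu$ started from $\bm p$ and from $\bm p'$ produces paths with $|P_t-P'_t|\le|\bm p-\bm p'|$ and total-variation distance of $dP^i$ bounded by $|p_i-p'_i|$. Lipschitz continuity of $G_i$ in $\bm p$ and boundedness then give $|J(\bm p)-J(\bm p')|\le C|\bm p-\bm p'|$ uniformly in $\bm\nu$, so $V$ is Lipschitz in $\bm p$.

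\textbf{Step 2: dynamic programming principle.} Next we establish
\[
V(x,\bm p)=\sup_{\bm\nu}\mathbb E\Big[\sum_i\int_{[0,\theta)}e^{-\rho t}G_i(X_t,P^{-i}_{t-})dP^i_t+e^{-\rho\theta}V(X_\theta,\bm P_{\theta-})\Big]
\]
for stopping times $\theta$. Because $V$ is continuous, the usual route applies: countable partition and $\varepsilon$-optimal measurable selection, then concatenation of controls. Pasting is straightforward since the dynamics \eqref{dynamics-P} are pathwise in $\bm\nu$.

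\textbf{Step 3: supersolution property.} We use three kinds of admissible controls.
\begin{itemize}
\item An immediate jump from $\bm p$ to $\bm p'\ge\bm p$ at time $0$ earns reward $\mathcal G(x,\bm p)\cdot(\bm p'-\bm p)$, evaluated at the pre-jump value, and continues from $V(x,\bm p')$. This gives $\mathcal M V(x,\bm p,\bm p')\ge0$ directly, with no test function needed.
\item Pushing coordinate $i$ continuously over $[0,h]$, i.e.\ $d\nu^i=c\,dt$, combined with Itô's formula for the test function $\varphi\ge V$ near the point and then $h\downarrow0$, $c\to\infty$, gives $-\partial_{p_i}\varphi-G_i\ge0$.
\item Not acting up to a small exit time, combined with Itô's formula, gives $\rho\varphi-\mathcal L^x\varphi\ge0$.
\end{itemize}
The boundary condition $u|_{\bm p=\bm 1}=0$ is immediate, since $dP\equiv0$ there.

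\textbf{Step 4: subsolution property (main obstacle).} Argue by contradiction. Suppose $\mathcal M V(x,\bm p,\bm p')>0$ for all $\bm p'>\bm p$, yet at a touching test function $\varphi\le V$ we have $\rho\varphi-\mathcal L^x\varphi\ge\eta$ and $-\partial_{p_i}\varphi-G_i\ge\eta$ for the relevant $i$ on a small ball $B_\delta$.

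Take an $\varepsilon$-optimal control and let $\theta$ be the first exit of $(X,\bm P)$ from $B_\delta$. Apply a generalized Itô formula for $e^{-\rho t}\varphi(X_t,\bm P_t)$ with $\bm P$ of bounded variation, splitting $d\bm P$ into its continuous part and its jumps. The continuous part contributes a loss of at least $\eta\,\mathbb E\int dP^c$. The drift contributes a loss of at least $\eta\,\mathbb E\int e^{-\rho t}dt$.

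The jumps are the difficulty, for two reasons:
\begin{itemize}
\item a jump may leave the ball;
\item the strict nonlocal inequality is assumed only at the point $(x,\bm p)$, not uniformly in a neighbourhood.
\end{itemize}
I would handle this by continuity of $V$ and $G$: strict positivity $\mathcal M V(x,\bm p,\bm p')>0$ with $\bm p'$ ranging over the set $\{\bm p'\ge\bm p,\ |\bm p'-\bm p|\ge\delta'\}$ extends by compactness to a uniform margin near $(x,\bm p)$. Small jumps (size $<\delta'$) stay in the ball and are controlled by the local gradient constraint applied along the segment. Large jumps are strictly suboptimal by the uniform margin. Combined with the DPP, this yields $V(x,\bm p)\le\varphi(x,\bm p)-c(\eta)+\varepsilon$, a contradiction. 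The delicate point is making the margin uniform for jumps starting near $\bm p$ with $\bm p'$ only weakly larger in some coordinates, which is why the definition uses $\bm p'>\bm p$. I expect this step to require the most care.
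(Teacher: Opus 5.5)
Your plan follows the paper's proof step for step:
\begin{itemize}
\item Lipschitz continuity in $\bm p$, by running one control from two initial masses and using $1-P^i_t=(1-p_i)e^{-\nu^i_t}$.
\item The DPP.
\item For the supersolution, a time-$0$ jump for $\mathcal MV\ge0$, the null control for $\rho\varphi-\mathcal L^x\varphi\ge0$, and a high-rate push for $-\partial_{p_i}\varphi-G_i\ge0$.
\item For the subsolution, a contradiction up to an exit time. Small jumps are absorbed by integrating the strict gradient inequality along the segment (the paper's \eqref{jump-test-function}), and large ones by strict positivity of $\mathcal MV$ (the paper's Case~1).
\end{itemize}
Two of your refinements improve on the paper. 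First, Gronwall plus truncation in $x$ is needed, because the bound $\mathbb E|X_t-\tilde X_t|\le|x-\tilde x|$ used there does not hold for general Lipschitz $b,\sigma$; a modulus of continuity is all the theorem needs. Second, you rightly treat large jumps occurring later from points near $(x,\bm p)$. The paper's Case~1 handles only a jump at time $0$ from $(x,\bm p)$, and defines its margin by a $\sup$ where an $\inf$ is required.

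There is, however, a concrete error: your touching directions are reversed in both Step~3 and Step~4. In the paper's definition, a supersolution test function satisfies $\max_{\mathcal B_\delta}(\varphi-V)=0$, i.e.\ $\varphi\le V$. That is exactly what lets you replace $V$ by $\varphi$ in $V(x,\bm p)\ge\mathbb E[\cdots+e^{-\rho h}V(X_h,\bm P_h)]$ before applying It\^o. A subsolution test function satisfies $\varphi\ge V$, which is what bounds the continuation value of an $\varepsilon$-optimal control from above. With $\varphi\ge V$ in Step~3 and $\varphi\le V$ in Step~4, neither chain of inequalities closes; swap them.

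The step you leave open can be closed as follows.
\begin{itemize}
\item \emph{Reading of the hypothesis.} Compactness of $K_{\delta'}:=\{\bm p'\ge\bm p:\ |\bm p'-\bm p|\ge\delta'\}$ and continuity of $\bm p'\mapsto\mathcal MV(x,\bm p,\bm p')$ yield a positive minimum $\eta_0$ only if $\mathcal MV>0$ on all of $K_{\delta'}$. That requires reading ``$\bm p'>\bm p$'' as ``$\bm p'\ge\bm p$, $\bm p'\neq\bm p$''. Under the coordinatewise-strict reading, $K_{\delta'}$ contains partial jumps about which only $\mathcal MV\ge0$ is known, and the argument does not go through (nor does the paper's).
\item \emph{Uniform margin.} With the former reading, transfer the margin to starting points $(x'',\bm p'')$ near $(x,\bm p)$. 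Compare the jump $\bm p''\to\bm p'$ with $\bm p\to\bm p\vee\bm p'$, using uniform continuity of $V$ and $\mathcal G$; this gives margin $\eta_0/2$.
\item \emph{Nested radii.} Take the strict local inequalities with margin $\eta$ on $\mathcal B_{\delta_0}$, a jump threshold $\delta'$ with $L\delta'<\eta$, and an exit ball $\mathcal B_\delta$ with $2\delta<\delta'$ and $\delta+\delta'\le\delta_0$. Then large jumps can occur only at the exit time, and small ones land where the gradient bound holds.
\end{itemize}

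Finally, the conclusion $V(x,\bm p)\le\varphi(x,\bm p)-c+\varepsilon$ needs $c$ independent of the control. The running losses give this only after an extra estimate excluding fast exits of $X$. The cleaner route is the paper's: make the touching strict by adding $\alpha|x'-x|^4+\beta|\bm p'-\bm p|^2$, so that $\varphi-V\ge\gamma>0$ at every exit point not reached by a large jump. Then combine the exit gain $\min(\gamma,\eta_0/2)$ with the running loss via $e^{-\rho\theta}+\rho\int_0^\theta e^{-\rho t}\,dt=1$.
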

\proof
\textit{Continuity of $V$}. Fix a control $\bm{\n} := (\n_1, \dots, \n_N)$, $x, \tilde x \in \dbR$, $\bm{p} := (p_1, \dots, p_N)$ and $\tilde{\bm{p}} = (\tilde p_1, \dots, \tilde p_N)$ in $[0,1]^N$. We respectively denote by $X$ and $\tilde X$ the dynamics \eqref{dynamics-X} starting from $x$ and $\tilde x$ at $t=0$, and by $\bp$ and $\tilde \bp$ the dynamics \eqref{dynamics-P} starting from $\bp$ and $\tilde \bp$.

Observe that:
\begin{align*}
\Big| G_i\Big(X_t, P_t^{-i, -k} \otimes_k \tilde P_t^k \Big)dP_t^i - G_i\Big(X_t, P_t^{-i}\Big)d P_t^i \Big| \le& \frac{L}{N-1} | \tilde P_t^k - P_t^k | dP_t^i \\
=& L | (\tilde p_k - p_k)(1-e^{-\n_t^k}) | dP_t^i \\
\le& L |\tilde p_k - p_k| dP_t^i,
\end{align*}
and
\begin{align*}
\Big| G_i\Big(X_t, P_t^{-i} \Big)dP_t^i - G_i\Big(X_t, P_t^{-i}\Big)d\tilde P_t^i \Big| \le& | G |_\infty | dP_t^i - d\tilde P_t^i | 
\le | G |_\infty| p_i - \tilde p_i | d(e^{-\n^i_t})
\end{align*}
Then, denoting by $\bm{e}_i$ the element of $\dbR^N$ with $i$th coordinate equal to $1$ and all others equal to $0$, we have:
\begin{align*}
    | V(x, \bm{p}) - V(x, \bm{p} + (\tilde p_k - p_k)\bm{e}_k) | \le& \sup_{\bm{\n}} \Big\{| p_i - \tilde p_i |\dbE\Big[ L\sum_{k \neq i} \int_0^\infty e^{\rho t} dP_t^k + | G |_\infty \int_0^\infty e^{-\rho t} d(e^{-\n_t^i}) \Big]\Big\} \\
    \le& \big(L + | G |_\infty \big)| p_i - \tilde p_i |.
\end{align*}
Thus, $V$ is Lipschitz-continuous with respect to $\bm{p}$. For the continuity with respect to $b$, we simply observe that:
\begin{align*}
\dbE\Big[\Big| G_k\Big(X_t, P_t^{-k} \Big)dP_t^k - G_k\Big(\tilde X_t, P_t^{-k}\Big)d P_t^k \Big|\Big] \le& L\dbE\Big[| X_t - \tilde X_t | dP_t^k \Big] 
\le L | x - \tilde x|, \\
\end{align*}
from which we immediately obtain the Lipschitz-continuity of $V$ in $x$. \\

\textit{Viscosity property.} Since $V$ is continuous, it satisfies the dynamic programming principle:
\bea\label{DPP}
V(x, \bm{p}) = \sup_{\boldsymbol{\nu}}  \dbE\Big[\sum_{i=1}^N \,\int_0^\th e^{-\rho t} G\Big(X_t, P_{t-}^{-i} \Big) dP_t^i + e^{- \rho \th} V(X_\th, \bm{P}_\th) \Big].
\eea
We observe that playing a control $\n$ jumping at $0$ and remaining constant on $(0, \infty)$ immediately provides the inequality in \eqref{DPE-infinite-horizon}.
We first prove the viscosity supersolution property. Let $\f \in C^{2,1}(\dbR \times [0,1)^N)$ be such that $(\f - V)(x, \bm{p}) = 0 = \max_{\cB_\d(b, \bm{p})}(\f - V)$ for some $\d > 0$. The dynamic programming principle \eqref{DPP} provides:
$$ V(x, \bm{p}) \ge  \dbE\Big[\sum_{i=1}^N \,\int_0^\th e^{-\rho t} G\Big(X_t, P_{t-}^{-i} \Big) dP_t^i + e^{- \rho \th} \f(X_\th, \bm{P}_\th) \Big] $$
for all control $\n$. Considering first a constant $\n$, we obtain after applying Itô's formula along with the usual localization arguments:
$$ \rho \f(x, \bm{p}) - \cL^x \f(x, \bm{p}) \ge 0. $$
We now consider a constant control $\boldsymbol{\n}^i$ defined coordinatewise by $d\n_t^j = \e^{-1}\1_{j = i}dt$. Applying again Itô's formula and localization arguments, we obtain:
$$ \rho \f(x, \bm{p}) - \cL^x \f(x, \bm{p}) + \e^{-1}\big(- \pa_{p_i}\f(x,\bm{p}) - G_i(x,\bm{p})\big)  \ge 0, $$
and we obtain after multiplying by $\e$ and letting $\e \to 0$:
$ - \pa_{p_i}\f(x,\bm{p}) - G_i(x,\bm{p}) \ge 0, $
which provides the supersolution property. \\

We now prove the subsolution property. Let $\bp \in [0,1]^N \setminus \{(1, \dots, 1)\}$ s.t.\ $\cM u(x,\bp, \bp') > 0$ for all $\bp' > \bp$. 
Let $\f \in C^{2,1}(\dbR \times [0,1))$ be such that $(\f - V)(x, \bm{p}) = 0 = \min_{\cB_\d(b, \bm{p})}(\f - V)$ for some $\d > 0$, and:
\bea\label{ineq-viscosity} 
\min\Big(\rho \f -\cL^{x} \f, \ \min_{i \in [N] : p_i < 1} \{ -\pa_{p_i} \f - G_i \}  \Big) > 0.  
\eea
By continuity of $\f$ and $G_i$, the above inequality remains true on some ball $\cB_{\d'}(x, \bm{p})$, with $\d' \le \d$, and we have in particular for all $(x',\bm{p}')$ and $(x',\bm{p}'') \in \cB_{\d'}(b,\bm{p})$:
\begin{align*}
    \f(x', \bm{p}') - \f(x', \bm{p}'') &\ge \int_0^1 \Big(\cG(x', \bm{p}' + \lambda(\bm{p}'' - \bm{p}') + \gamma \Big)\cdot (\bm{p}' - \bm{p}'') \\
    &\ge \Big(\cG(x', \bm{p}') - L\d' + \gamma)\cdot (\bm{p}' - \bm{p}'').
\end{align*}
by Lipschitz-continuity of $\cG$ in its second argument. By choosing $\d'$ sufficiently small, we then have:
\bea\label{jump-test-function}
\f(x', \bm{p}') - \f(x', \bm{p}'') \ge \cG(x', \bm{p}')(\bm{p}' - \bm{p}''). 
\eea
Note also that we may assume without loss of generality that $\f$ takes the form:
$$ \f(x', \bm{p}') = V(x, \bm{p}) + \a | x' - x |^4 + \b | \bm{p}' - \bm{p} |^2 \quad \mbox{for some} \ \a, \b > 0. $$
We finally denote by $\th_{\d'}$ the first time the process $(X_t, \bm{P}_t)$ (starting from $(X_0, \bP_{0-}) = (x, \bm{p})$) exits the ball $\cB_{\d'}(x, \bm{p})$. By property of the Brownian motion, we have $\th_{\d} < \infty$, a.s. We now fix an arbitrary control $\boldsymbol{\n}$. We distinguish between two cases: \\
{\textit Case 1:} $(x, \bm{P}_0) \notin \cB_{\d'}(x, \bm{p})$. Then we have:
\bea\label{nonDPP1}
V(x,\bm{p}) \ge \eta + V(x, \bm{P}_0) + \cG(x, \bm{P}_0) \cdot (\bm{P}_0 - \bm{p}), 
\eea
with $\eta := \sup_{\bm{p}' \in [\bm{p}, \bm{p} + r]} \cM V(x, \bm{p}, \bm{p}') > 0$ as $\cM V(x, \bm{p}, \bm{p}') > 0$ for all $\bm{p}' > \bm{p}$. \\

{\textit Case 2:} $(x, \bP_0) \in \cB_{\d'}(x, \bp)$. Then, by right continuity of $s \mapsto \bP_s$, we have $\th_{\d'} > 0$, a.s. We then apply Itô's formula to $\f(X_t, \bm{P}_t)$ between times $0$ and $\th_{\d'}$:
\begin{align*}
\f(x, \bm{p}) =& \dbE\Big[ e^{-\rho \th_{\d'}} \f(X_{\th_{\d'}}, \bm{P}_{\th_{\d'}}) + \int_0^{\th_{\d'}} e^{-\rho t}\big(\rho \f(X_t, \bm{P}_t) - \cL^x \f(X_t, \bm{P}_t) \big)dt + \sum_{i=1}^N -e^{-\rho t} \pa_{p_i} \f(X_t, \bm{P}_{t-}) dP_t^i \\
&- \sum_{0 \le s \le \th_{\d'}} e^{-\rho t}\Big( \f(X_t, \bm{P}_t) - \f(X_t, \bm{P}_{t-}) - \sum_{i=1}^N \pa_{p_i} \f(X_t, \bm{P}_{t-})\D P_t^i \Big)\Big] \\
\ge& \dbE\Big[ e^{-\rho \th_{\d'}} \f(X_{\th_{\d'}}, \bm{P}_{\th_{\d'}}) + \int_0^{\th_{\d'}} e^{-\rho t}\sum_{i=1}^N G_i(X_t, \bm{P}_{t-})dP_t^i \Big],
 \end{align*}
 where we used \eqref{jump-test-function}. Then, noticing that
 $$ \f(X_{\th_{\d'}}, \bm{P}_{\th_{\d'}}) \ge V(X_{\th_{\d'}}, \bm{P}_{\th_{\d'}}) + \a (\d')^4 \vee \b (\d')^2, $$
 we have:
 \bea\label{nonDPP2}
 \f(x, \bm{p}) = V(x, \bm{p}) \ge \a (\d')^4 \vee \b (\d')^2 + \dbE\Big[ e^{-\rho \th_{\d'}} V(X_{\th_{\d'}}, \bm{P}_{\th_{\d'}}) + \int_0^{\th_{\d'}} e^{-\rho t}\sum_{i=1}^N G_i(X_t, \bm{P}_{t-})dP_t^i \Big]. 
 \eea
 Finally, putting together \eqref{nonDPP1} and \eqref{nonDPP2}, we obtain contradiction of the dynamic programming principle \eqref{DPP}. Therefore \eqref{ineq-viscosity} is false and $V$ is a viscosity subsolution of \eqref{DPE-infinite-horizon}.
\qed 

\begin{remark}\label{rem:supersolution}
    While the equality in \eqref{DPE-infinite-horizon} only holds for $\bm{p}$ s.t. $\cM V(x, \bm{p}, \bm{p}')$ for all $\bm{p}' > \bm{p}$, we proved in fact that $V$ is a viscosity supersolution of
    $$ \min\Big( \rho u - \cL^x u, \min_{i \in [N] : p_i < 1} \{ - \pa_{p_i} u - G_i \} \Big) = 0$$
    in every $(x,\bp) \in \dbR \times [0,1]^N$. 
\end{remark}

\begin{theorem}
\label{thm:regularity-infinite-horizon}
Assume that $G$ is bounded, $C_b^2$ in the first variable and Lipschitz continuous in the second variable. Then $V$ defined in \eqref{dynamic-potential-problem-infinite} belongs to $W^{(2,1), \infty}$, that is, $V$ admits a first order weak derivative with respect to \ $p$ and a second order weak derivative w.r.t.\ $b$, and both are bounded.
\end{theorem}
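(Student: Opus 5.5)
Since $V$ is Lipschitz in $\bp$ (Theorem~\ref{thm:viscosity}), Rademacher's theorem gives a bounded weak gradient $\pa_{\bp}V$ with $|\pa_{p_i}V|\le L+|G|_\infty$. So the work lies entirely in the second weak derivative in $x$. I plan to show that $x\mapsto V(x,\bp)$ is semiconvex and semiconcave, uniformly in $\bp$. A function with both properties has a Lipschitz derivative, hence a bounded second weak derivative, i.e.\ it lies in $W^{2,\infty}$ locally.

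\emph{Semiconvexity.} For fixed $\bm{\n}$ and $h>0$, consider $X^{x\pm h}$ and $X^x$ driven by the same Brownian motion, with $\bP$ unchanged since it does not depend on $x$. Write $J(x;\bm{\n})$ for the objective. Then
\[
J(x+h)+J(x-h)-2J(x)=\sum_i\dbE\Big[\int_0^\infty e^{-\rho t}\big(G_i(X^{x+h}_t,\cdot)+G_i(X^{x-h}_t,\cdot)-2G_i(X^x_t,\cdot)\big)dP^i_t\Big].
\]
Since $G_i\in C_b^2$ in $x$, the integrand is bounded by $|\pa_{xx}G|_\infty\,|(X^{x+h}-X^x)|^2$ plus $|\pa_xG|_\infty\,|X^{x+h}_t+X^{x-h}_t-2X^x_t|$. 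Standard SDE estimates with Lipschitz $b,\sigma$ bound the first term by $Ch^2e^{Ct}$. For the second term, $\dbE|X^{x+h}+X^{x-h}-2X^x|\le Ch^2e^{Ct}$ holds provided $b,\sigma\in C^2_b$, which I will assume or else handle by mollification. The total mass $\int dP^i\le1$, but the factor $e^{Ct}$ is a problem against $e^{-\rho t}$ integrated over $dP^i_t$. I would first assume $\rho>C$. Otherwise I would replace the synchronous coupling by the $L^\infty$-type estimate through the flow derivative, using $\sup_t e^{-\rho t}\dbE[\cdot]$. Taking the supremum over $\bm{\n}$ of $J(x\pm h)$ at a near-optimal control for $x$ then gives $V(x+h)+V(x-h)-2V(x)\ge -Ch^2$.

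\emph{Semiconcavity (main obstacle).} Taking the supremum preserves lower bounds but not upper bounds, so the reverse inequality cannot come from the control representation. I would get it from the PDE instead, using Remark~\ref{rem:supersolution}: $V$ is a viscosity supersolution of $\rho V-\cL^xV\ge0$, and on the continuation region $\{-\pa_{p_i}V-G_i>0\ \forall i,\ \cM V>0\}$ it is a viscosity solution of $\rho V-\cL^x V=0$. There, uniform ellipticity ($\sigma\ge\sigma_0$) gives $\frac12\sigma^2\pa_{xx}V\le \rho V-b\pa_xV\le C$ in the viscosity sense, since $V$ is bounded and Lipschitz in $x$. This yields the upper bound on $\pa_{xx}V$ as a distribution. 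On the action region, where $-\pa_{p_i}V=G_i$ or a jump is optimal, I would instead use the representation $V(x,\bp)=V(x,\bp')+\cG(x,\bp)\cdot(\bp'-\bp)$ (or its infinitesimal version). Combined with the $C^2_b$ regularity of $G$ in $x$ and the semiconvexity already shown at $\bp'$, this transfers the $x$-regularity. The delicate point is patching the two regions: the free boundary is not known to be regular. I would handle this by working with second difference quotients $\D_h^2V$ and a comparison/maximum-principle argument on $\rho w-\cL^x w$ for $w=\D_h^2V$ in the one-dimensional $x$ variable, using the supersolution property from Remark~\ref{rem:supersolution} to control $w$ from above uniformly in $h$.

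Once $|\D_h^2V|\le C$ is established uniformly in $(h,\bp)$, weak-$*$ compactness in $L^\infty$ gives a bounded weak $\pa_{xx}V$. Together with the bounded $\pa_{\bp}V$, this proves $V\in W^{(2,1),\infty}$.
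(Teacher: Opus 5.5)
Your architecture matches the paper's: Lipschitz continuity in $\bp$ for the $\bp$-derivative, a lower bound on $D^2_{x,\e}V$ from the control representation with a common control, and an upper bound from the PDE. However, the semiconcavity step has a genuine gap as written. It comes from misreading what Remark~\ref{rem:supersolution} gives you. The bound you want, $\frac12\si^2\pa_{xx}V\le\rho V-b\pa_xV$, does not need the equation to hold with equality on the continuation region. It \emph{is} the supersolution inequality $\rho V-\cL^xV\ge0$, and the Remark asserts this in the viscosity sense at every $(x,\bp)$, action and jump regions included; the supersolution half of the proof of Theorem~\ref{thm:viscosity} never uses $\cM V>0$. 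So there is nothing to patch. For a test function $\f$ touching $V$ from below, $|\pa_x\f|$ is at most the Lipschitz constant $L_x$ of $V$ in $x$. Hence $\pa_{xx}V\le C':=\frac{2}{\si_0^2}(\rho|V|_\infty+|b|_\infty L_x)$ in the viscosity sense, so $V-\frac{C'}{2}x^2$ is a viscosity supersolution of $-\pa_{xx}u\ge0$. It is therefore concave in $x$, and $D^2_{x,\e}V\le C'\e^2$ uniformly in $\bp$. That is exactly the paper's upper bound.

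The substitute you sketch for the action region would not have delivered this bound. There you only know $\cM V\ge0$, i.e.\ $V(\cdot,\bp)\ge V(\cdot,\bp')+\cG(\cdot,\bp)\cdot(\bp'-\bp)$, with equality only where the jump is optimal. Taking second differences at such a point gives $D^2_{x,h}V(\cdot,\bp)\ge D^2_{x,h}V(\cdot,\bp')+D^2_{x,h}\cG(\cdot,\bp)\cdot(\bp'-\bp)$, which propagates a lower bound (semiconvexity), not an upper one. The condition $-\pa_{p_i}V=G_i$ carries no information on $\pa_{xx}V$. Finally, $D^2_{x,h}V$ satisfies no linear inequality to which a maximum principle applies, since $V$ solves a nonlinear QVI with $x$-dependent coefficients.

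Your semiconvexity step is the paper's lower bound, done more carefully. The paper simply writes $D^2_{x,\e}J=\sum_i\dbE[\int_0^\infty e^{-\rho t}D^2_{x,\e}G_i(X_t,\bP_{t-})\,dP^i_t]$, tacitly using $X^{x\pm\e}=X^x\pm\e$, which holds only for translation-invariant dynamics. For genuinely $x$-dependent $b,\si$ the extra hypotheses you flag are real. Mollifying $b,\si$ does not by itself help, since the second-order flow estimate depends on their $C^2$ norms. Your fallback via $\sup_t e^{-\rho t}\dbE[\cdot]$ is not valid either. Because $dP^i$ is random and adapted, a quantity like $\dbE[\int_0^\infty e^{-\rho t}Y_t\,dP^i_t]$, with e.g.\ $Y_t=|X^{x+h}_t-X^x_t|^2$, is bounded by $\sup_\tau\dbE[e^{-\rho\tau}Y_\tau]$ over stopping times, not by $\sup_t e^{-\rho t}\dbE[Y_t]$. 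You would bound the former by a supermartingale argument, which again requires $\rho$ to dominate the growth rate of the flow.
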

\proof
We showed in the proof of Theorem \ref{thm:viscosity} that $V$ is Lipschitz-continuous with respect to each $p_i$, $i \in [N]$.
Thus, $V$ admits a bounded weak derivative w.r.t\ $p_i$ for every $i \in [N]$.

We then focus on the second order derivative in $b$. Let $\e \in \dbR$. We want to show that:
\bea\label{finite-difference}
-C\e^2 \le D_{x,\e}^2 V(x, \bm{p}) \le C\e^2 
\eea
for some constant $C \ge 0$ independent of $\e$, and
$$ D_{x,\e}^2 V(x, \bm{p}) :=  V(x + \e, \bm{p}) + V(x - \e, \bm{p}) - 2V(x, \bm{p}) .$$
We first focus on the lower bound. Introduce:
$$ J(b, \bm{p}, \bm{\n}) := \sum_{i=1}^N \dbE\Big[\int_0^\infty e^{-\rho t} G_i\Big(X_t, \bP_{t-}\Big)dP_t^i \Big| X_0 = b, \bm{P}_{0-} = \bm{p} \Big]. $$
By definition, we have:
\begin{align*}
 V(x + \e, \bm{p}) + V(x - \e, \bm{p}) - 2V(x, \bm{p}) \ge& J(x + \e, \bm{p}; \bm{\n'}) + J(x - \e, \bm{p}; \bm{\n'}) + 2 \inf_{\bm{\n}} - J(x, \bm{p}; \bm{\n}) \ge \inf_{\bm{\n}} \Big\{ D_{x,\e}^2  J(x, \bm{p}; \bm{\n} ) \Big\} \\
 =& \inf_{\bm{\n}} \sum_{i=1}^N \dbE\Big[\int_0^\infty e^{-\rho t}D_{x,\e}^2 G_i\Big(X_t, \bP_{t-}\Big)dP_t^i \Big| X_0 = x, \bm{P}_{0-} = \bm{p} \Big].
 \end{align*}
 Since $G$ is $C^2$ with bounded derivatives in $x$, we have $ D_{x,\e}^2 G \ge {\color{green}-}\lVert \pa_{xx}^2 G \rVert_\infty \e^2$, and therefore: 
 \begin{equation}\label{ineq-second-order}
 D_{x,\e}^2 V \ge - \lVert \pa_{xx}^2 G \rVert_\infty \e^2 \sum_{i=1}^N (1 - p_i)
 \ge - N\lVert \pa_{xx}^2 G \rVert_\infty \e^2.
 \end{equation}
 Since all the processes $P^i$ have a total variation bounded by $1$, we deduce the left inequality of \eqref{finite-difference}. \\

 For the upper bound, we rely on the PDE characterization of $V$. By Remark \ref{rem:supersolution}, $V$ is a viscosity supersolution to $\rho u - \cL^x u = 0$, which implies that $-\partial_{xx}^2 V \ge -\frac{2}{\sigma_0^2}\big(\rho|V|_\infty + |b \pa_x V |_\infty\big) $ in the viscosity sense. Thus, the auxiliary continuous function $\tilde{V}(x, \mathbf{p}) := V(x, \mathbf{p}) - \frac{C’}{2}b^2$, with $C' := \frac{2}{\sigma_0^2}\big(\rho|V|_\infty + |b \pa_x V |_\infty\big)$, is a viscosity supersolution to $-\partial_{xx}^2 \tilde{V} \ge 0$. This implies that $\tilde{V}$ is concave in $x$. Writing the concavity inequality $\tilde{V}(x+\varepsilon, \bp) + \tilde{V}(x-\varepsilon, \bp) - 2\tilde{V}(x, \bp) \le 0$ and expanding the quadratic terms yields exactly $D_{x,\varepsilon}^2 V(x, \mathbf{p}) \le C\varepsilon^2$
 
 By Evans \cite{evans2022partial}, the double inequality \eqref{finite-difference} provides the desired Sobolev regularity. 
\qed

\begin{theorem}\label{thm:verification}
    The value function $V$ \eqref{DPE-infinite-horizon} is the unique $W^{(2, 1), \infty}$ solution to the dynamic programming equation \eqref{DPE-infinite-horizon}.
\end{theorem}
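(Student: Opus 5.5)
Existence is already in hand: by Theorem~\ref{thm:viscosity} $V$ is a continuous viscosity solution of \eqref{DPE-infinite-horizon}, and by Theorem~\ref{thm:regularity-infinite-horizon} it lies in $W^{(2,1),\infty}$. What remains is uniqueness in this class. The plan is a verification argument: given any $W^{(2,1),\infty}$ solution $u$ of \eqref{DPE-infinite-horizon}, show $u\ge J(x,\bp;\bm\n)$ for every admissible control, hence $u\ge V$. Then show $u\le V$, either by building a (near-)optimal control from $u$ or by a comparison argument. The boundary condition $u|_{\bp=\bm 1}=0$, together with the finiteness of the total mass of each $P^i$, supplies the terminal behaviour needed to send the horizon to infinity.

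\textbf{Step 1: $u\ge V$.} Because $u$ is only $W^{(2,1),\infty}$, I would mollify it in $(x,\bp)$ to obtain $u^\delta=u*\rho_\delta$. Since $\si\ge\si_0>0$, the generalized It\^o (Krylov) formula applies to $u$ directly in $x$. Fix a control $\bm\n$ and apply It\^o's formula to $e^{-\rho t}u^\delta(X_t,\bP_t)$ on $[0,T\wedge\theta_n]$. This produces three contributions:
\begin{itemize}
\item the drift term $\rho u^\delta-\cL^x u^\delta$, which is asymptotically nonnegative because supersolution inequalities pass to the limit under mollification;
\item the continuous part $\sum_i(-\pa_{p_i}u^\delta)\,dP^{i,c}$, bounded below by $G_i\,dP^{i,c}$ up to $O(\delta)$, by the gradient constraint and the Lipschitz continuity of $G$;
\item the jump terms $u(X_t,\bP_t)-u(X_t,\bP_{t-})$, which are controlled exactly by the nonlocal condition $\cM u\ge 0$. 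This yields $u(\bP_{t-})\ge u(\bP_t)+\cG(X_t,\bP_{t-})\cdot\Delta\bP_t$, precisely the pre-jump evaluation appearing in the objective.
\end{itemize}
Letting $\delta\to0$, then $n,T\to\infty$, and using boundedness of $u$ with $e^{-\rho T}\to0$, gives $u(x,\bp)\ge J(x,\bp;\bm\n)$.

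\textbf{Step 2: $u\le V$.} I would construct an $\e$-optimal control by partitioning $\dbR\times[0,1]^N$ into three regions:
\begin{itemize}
\item the set where $\cM u(x,\bp,\bp')=0$ for some $\bp'>\bp$, where the control jumps to such a $\bp'$;
\item the set where some $-\pa_{p_i}u-G_i=0$, where the control reflects or pushes $P^i$;
\item the region where $\rho u-\cL^xu=0$, where the control is inactive.
\end{itemize}
Along this control every inequality in Step 1 becomes an equality, giving $u=J\le V$. The obstacle is the existence of such a Skorokhod-type reflected control in a multidimensional, merely Sobolev setting. My first attempt would instead prove $u\le V$ by a penalization or comparison argument. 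Approximate the singular control by bounded intensities $d\n^i=\ell^i dt$ with $\ell^i\le 1/\e$, whose value $V^\e$ solves the semilinear equation $\rho v-\cL^x v-\frac1\e\sum_i(1-p_i)(\pa_{p_i}v+G_i)^+=0$. Show that $u$ is a subsolution of this equation wherever the nonlocal constraint is inactive, and pass $\e\to0$ using the uniform Lipschitz bounds of Theorem~\ref{thm:viscosity}.

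The main difficulty is handling the states where $\cM u=0$ is attained, since there the PDE gives no information. I would treat these by first applying the optimal jump, which is admissible by the equality in $\cM u$, and then restarting the argument from the post-jump state, where $\cM u>0$ holds strictly for all further $\bp'$.
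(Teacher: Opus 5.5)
\textbf{Step 1 is fine; the gap is in Step 2.} Your Step~1 is the paper's Step~1, done with more care about regularity: It\^o's formula for $e^{-\rho t}u(X_t,\bP_t)$, the gradient constraint on the continuous part, $\cM u\ge0$ on the jumps with pre-jump evaluation, then $t\to\infty$.

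\textbf{The penalization step runs the wrong way.} Where the nonlocal constraint is slack, \eqref{DPE-infinite-horizon} gives $-\pa_{p_i}u-G_i\ge0$ for every $i$. Hence $(\pa_{p_i}u+G_i)^+=0$, and the penalized operator at $u$ reduces to $\rho u-\cL^x u\ge0$. So $u$ is a \emph{super}solution of the $\e$-equation, which only yields $u\ge V^\e$, a direction you already have.

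It fails to be a subsolution wherever some $-\pa_{p_i}u=G_i$ but $\rho u-\cL^xu>0$. This is the generic situation in the stopping region. In Section~\ref{sec:exact_potential_example}, for $x>x^*$, the function $(1-p)v(x)$ has $-\pa_p=(x-K)^+$, while $\rho v-\frac{\sigma^2}{2}v''=\rho(x-K)>0$. At such points the penalty term vanishes for every $\e$, so no choice of $\e$ helps.

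\textbf{The missing idea is the occupation-time bound used in the paper.} Do not look for a pointwise subsolution. Run the feedback intensity $\lambda^{\e,i}=\e^{-1}\1_{\{-\pa_{p_i}u-G_i\le\e\}}$, switched off once $P^i\ge1-\sqrt\e$. Then bound the \emph{time} spent where the defect $\rho u-\cL^xu$ may be positive by the stopping-mass budget:
\[
\mathrm{Leb}\{s:\ \lambda^{\e,i}_s=\e^{-1}\ \text{for some } i\}\le\e\int\sum_i\lambda^{\e,i}_s\,ds=\e\int\sum_i\frac{dP^i_s}{1-P^i_s}\le N\sqrt\e .
\]
Because $\pa^2_{xx}u\in L^\infty$, the defect is bounded. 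Pushing nearly-stopped coordinates to $1$ and iterating then gives $u\le J(x,\bp;\bm\n^\e)+C\sqrt\e$. This needs neither a Skorokhod problem nor a comparison principle, and it is exactly where the $W^{(2,1),\infty}$ hypothesis enters.

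\textbf{Your concern about the binding set is legitimate, but your fix is not proved.} The binding set is where $\cM u(x,\bp,\bp')=0$ for some $\bp'>\bp$.
\begin{itemize}
\item There \eqref{DPE-infinite-horizon} imposes nothing locally.
\item Absolutely continuous controls cannot reproduce the pre-jump reward $\cG(X_t,\bP_{t-})\cdot\D\bP_t$. A fast continuous push earns $\int\cG\cdot d\bP$ along a path. When the $G_i$ decrease in the others' masses, this is smaller than $\cG(\bp)\cdot(\bp'-\bp)$ for a simultaneous jump of several coordinates.
\item Consequently $\lim_\e V^\e$ can lie strictly below $V$, and no purely bounded-intensity argument can give $u\le V$.
\item The paper's Step~2 does not treat this set either. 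It uses $\rho u-\cL^xu=0$ wherever all gradient constraints are $\e$-slack, and its control jumps only to $\bm 1$.
\end{itemize}

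Your claim that after an optimal jump $\cM u>0$ holds for all further targets is false in general. If $\cM u(x,\bp,\bp')=0$, then for $\bp''\ge\bp'$
\[
\cM u(x,\bp',\bp'')=\cM u(x,\bp,\bp'')-\bigl(\cG(x,\bp')-\cG(x,\bp)\bigr)\cdot(\bp''-\bp').
\]
This can vanish when $\cG$ increases in $\bp$, even for a maximal target $\bp'$; for instance, when one player's stopping raises the other's reward, the optimal move is a chain of jumps.

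Handling this would require several ingredients you have not supplied:
\begin{itemize}
\item chains of jumps, approximated by jumps at nearby times, since they cannot be merged into one admissible jump when the reward is evaluated at $\bP_{t-}$;
\item a measurable selection of jump targets;
\item control of possibly infinitely many re-entries of $(X_t,\bP_t)$ into the binding set.
\end{itemize}

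Relatedly, your Step~1 and the paper's both use $\rho u-\cL^xu\ge0$ on the binding set. \eqref{DPE-infinite-horizon} as written does not give this for a general $u$; Remark~\ref{rem:supersolution} provides it only for $V$. As it stands, your Step~2 does not establish $u\le V$.
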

\proof
Let $u$ be a $W^{2, 1, \infty}$ solution to \eqref{DPE-infinite-horizon}. \\
\textbf{Step 1.} We first show that $u \ge V$. Fix $(x, \bm{p}) \in \dbR \times [0,1]^N$ and a control $\bm{\n}$. By Itô-Tanaka-Meyer formula, we have:
\begin{align*}
    u(x, \bm{p}) =& \dbE\Big[ e^{-\rho t}u(X_t, \bm{P}_t) + \int_0^t e^{-\rho s} \Big((\rho u(X_s, \bm{P}_s) - \cL^x u(X_s, \bm{P}_s))ds - \sum_{i=1}^N \pa_{p_i} u(X_s, \bm{P}_{s-})dP_s^i\Big) \\ 
    &- \sum_{0 \le s \le t} e^{-\rho s}\Big( u(X_s, \bm{P}_s) - u(X_s, \bm{P}_{s-}) - \sum_{i=1}^N \pa_{p_i} u(X_s, \bm{P}_{s-})\D P_s^i \Big) | X_0 = x, \bm{P}_{0-} = \bm{p}  \Big],
\end{align*}
as the fact that $\pa_x u$ is bounded a.e.\ implies that the stochastic integral has a zero expectation. Now, using the fact that 
$$ - \Big( u(X_s, \bm{P}_s) - u(X_s, \bm{P}_{s-})\Big) \ge \cG(X_s, \bm{P}_{s-}) \cdot \D \bm{P}_s \quad \mbox{and} - \pa_{p_i}u \ge G_i, $$
plus the fact that $\rho u - \cL^x u \ge 0$, we deduce that, for all $t \ge 0$,
$$u(x, \bm{p}) \ge \dbE\Big[e^{-\rho t}u(X_t, \bm{P_t}) + \int_0^t e^{-\rho s}\cG(X_s, \bm{P}_{s-}) \cdot d \bm{P}_s | X_t = x, \bm{P}_{t-} = \bm{p} \Big]. $$
By boundedness of $u$, we obtain by letting $t \to \infty$:
$$ u(x, \bm{p}) \ge \dbE\Big[\int_0^\infty e^{-\rho s}\cG(B_s, \bm{P}_{s-}) \cdot d \bm{P}_s | X_t = x, \bm{P}_{t-} = \bm{p} \Big],$$
which implies by arbitrariness of the control that $u \ge V$. \\ 
\textbf{Step 2.} To prove the converse inequality, we construct a nearly optimal control.
Let $\varepsilon > 0$. We define, almost everywhere: $\boldsymbol{\lambda}^{\varepsilon} = (\lambda^{\varepsilon, 1}, \dots, \lambda^{\varepsilon, N})$ defined for each $i \in [N]$ by:
\begin{equation*}
    \lambda^{\varepsilon, i}(x, \mathbf{p}) = \e^{-1} \mathbf{1}_{\{ -\partial_{p_i}u(x, \mathbf{p}) - G_i(x, \mathbf{p}) \le \varepsilon \}},
\end{equation*}
and then set, for each $i \in [N]$:
\begin{equation*}
\n_t^{\varepsilon, i} :=  \lambda^{\varepsilon, i}(X_t, \mathbf{P}_t)\1_{\{ P_t^i < 1 - \sqrt{\e}\}} \quad \mbox{for all} \ t \ge 0. 
\end{equation*}
Because this control rate is bounded by $\e^{-1}$, the corresponding state dynamics admits a unique strong solution. Applying Itô-Tanaka-Meyer formula to $u$ along the dynamics driven by $\boldsymbol{\lambda}^{\varepsilon}$, and observing that the state process does not jump along this control, we obtain:
\begin{equation}\label{ito-proof1}
u(x, \bm{p}) = \dbE\Big[ e^{-\rho t} u(X_t, \bm{P}_t) + \int_0^t e^{-\rho s}\Big(\rho u(X_s, \bm{P}_s) - \cL^x u(X_s, \bm{P}_s)ds - \sum_{i=1}^N \pa_{p_i} u(X_s, \bm{P}_s)(1-P_s^i)\lambda_s^{\e, i}ds\Big)\Big]. 
\end{equation}
Now, observe that, by construction of $\boldsymbol{\lambda}^{\varepsilon}$:
$$ \int_0^t e^{-\rho s}\sum_{i=1}^N \pa_{p_i} u(X_s, \bm{P}_s)(1-P_s^i)\lambda_s^{\e, i}ds \le C\varepsilon + \int_0^t e^{-\rho s} \cG(X_s, \bm{P}_s) \cdot d\bm{P}_s, $$
for some constant $C \ge 0$. Also, by boundedness of $\pa_{xx}^2 u$ and \eqref{DPE-infinite-horizon}, we have:
$$ \int_0^t e^{-\rho s}\big[\rho u(X_s, \bm{P}_s) - \cL^x u(X_s, \bm{P}_s)\big]ds \le C\Big[ \varepsilon + {\rm{Leb}}\big(\{ s \in [0,t] : \lambda^{\e, i}_s = \e^{-1} \ \mbox{for all} \ i \in [N]\} \big)\Big]. $$
Introduce now the stopping time:
\bea\label{stopping-time}
  \t_\e^{(1)} := \inf\{ s \ge 0 : P_s^i \ge 1 - \sqrt{\e} \q \mbox{for some} \ i \in [N]\}. 
\eea 
We have:
\begin{align*}
    {\rm{Leb}}\big(\{ s \in [0,\t_\e^{(1)}] : \lambda^{\e, i}_s = \varepsilon^{-1} \ \mbox{for all} \ i \in [N]\}\big) &\le \e\int_0^{\t_\e^{(1)}} \sum_{i \in [N]} \l_s^{\e, i} ds = \e\int_0^{\t_\e^{(1)}} \sum_{i \in [N]} \frac{dp_s^i}{1 - p_s^i} \\
    &\le \frac{\e}{\sqrt{\e}}\int_0^{\t_\e^{(1)}ta} \sum_{i \in [N]} dp_s^i \le N\sqrt{\e} .
\end{align*}
Then we obtain from \eqref{ito-proof1}:
$$
u(x, \bm{p}) \le \dbE\Big[ e^{-\rho \t_\e^{(1)}} u(X_{\t_\e^{(1)}}, \bm{P}_{\t_\e^{(1)}}) + \int_0^{\t_\e^{(1)}} e^{-\rho s}\cG(B_s, \bm{P}_s) \cdot d\bm{P}_s \Big] + C_N\sqrt{\varepsilon},
$$
with $C_N$ a nonnegative constant depending on $N$. 
 Now, we set $P_t^i = 1$ on $[\t_\e^{(1)}, \infty)$ for all the indices $i$ satisfying the inequality in \eqref{stopping-time}.

Defining the (random) set $\cI_1 := \{ i \in [N] : P_{\t_\e}^i = 1 \}$, we may then introduce:
\bea\label{stopping-time2}
  \t_\e^{(2)} := \inf\{ s \ge \t_\e^{(1)} : P_s^i \ge 1 - \sqrt{\e} \q \mbox{for some} \ i \notin \cI_1\}. \nonumber 
\eea 
Then, we re-iterate the above reasoning to derive:
$$ u(x, \bm{p}) \le \dbE\Big[ e^{-\rho \t_\e^{(2)}} u(X_{\t_\e^{(2)}}, \bm{P}_{\t_\e^{(2)}}) + \int_0^{\t_\e^{(2)}} e^{-\rho s}\cG(X_s, \bm{P}_s) \cdot d\bm{P}_s \Big] + 2C_N \sqrt{\e}. 
$$
By induction, we re-iterate this construction with stopping times $\t_\e^{(k)}$ until all players are stopped (that is, until $\boldsymbol{P}_t = \mathbf{1}$) or until $\t_\e^{(k)} \ge \ln(\e)/(-\rho)$. In the first case, we use the boundary condition in \eqref{DPE-infinite-horizon} ; in the second case, we use the boundedness of $u$ and $\cG$. Both cases  lead to:

$$ u(x, \bm{p}) \le \dbE\Big[ \int_0^{\infty} e^{-\rho s}\cG(X_s, \bm{P}_s) \cdot d\bm{P}_s \Big] + NC_N \e \le V(x,\bm{p}) + NC_N \sqrt{\e}, $$
and therefore we conclude by arbitrariness of $\e > 0$ that $u = V$.
\qed

\section{Reinforcement learning algorithm for finite-player optimal stopping}
\label{sec: DDPG}
In this section,  we propose a reinforcement learning  algorithm for the randomized potential function~\eqref{eq:local_potential} when the model is unknown.  For later numerical algorithm, we consider the finite-time-truncated approximation of the pure optimal stopping game with finite time horizon $T>0$.
\subsection{Numerical algorithm}
Recall that  we first randomize the stopping time in Section~\ref{sec: randomize_game}, and approximate the finite-player optimal stopping game with an $\alpha_N$-potential function $\phi: (\cR_T)^N\to \dbR$ in~\eqref{eq:local_potential}. From Proposition ~\ref{prop:local_alpha_estimate}, every maximizer $\boldsymbol{p}^*\in\arg\max_{\boldsymbol{p}\in (\cR_T)^N} \phi(\boldsymbol{p})$ is an $\alpha_N$-Nash equilibrium of the randomized game~\eqref{eq:randomized_game}. For every $i\in [N]$ and each $p^i \in \cR_T$, there exists a non-decreasing adapted process $\n^i$ taking its values in $[0,+\infty)$ such that:
$$ p_t^i = 1 - e^{-\n_t^i} \ \mbox{for all} \ t \in [0,T]. $$

Since this is a singular control problem, we further regularize and approximate the process $\nu^i$ by an absolutely continuous intensity control
    $$
    d\nu_t^i = \lambda_t^i\,dt,
    \qquad
    \lambda_t^i \ge 0,
    $$ 
where for every $i\in [N]$,
$$\lambda^i\in \cA_T := \{ \lambda : [0,T] \times \O \to [0,\lambda_{\max}] \ \  \mbox{such that} \  \lambda  \ \mbox{is RCLL and adapted} \},$$
and the controlled vector is
$$
\lambda=(\lambda^1,\ldots,\lambda^N)\in (\cA_T)^N.
$$
Hence the controlled stopping fractions satisfy
$$
dp_t^i = (1-p_t^i)\lambda_t^i\,dt,
\qquad i=1,\dots,N,
$$
and the regularized potential control problem is then
\begin{equation}
\label{eq: intensity_control}
    \sup_{\lambda\in (\cA_T)^N} \Psi(\lambda):=\sup_{\lambda\in (\cA_T)^N} \phi(\boldsymbol{p}(\lambda)).
\end{equation}
When the model is unknown,  we  adopt the CT-DDPG algorithm proposed in~\cite{ChengGuoZhang2025} to solve the control problem~\eqref{eq: intensity_control}. The pseudo-code is illustrated in Algorithm~\ref{alg:general-finite-player-stopping}.

\label{sec:num_exp}
\subsection{Experiments with exact potential stopping game}
\label{sec: potential_model}
We first consider the exact potential game introduced in Section~\ref{sec:exact_potential_example}, and use its closed-form solution as the benchmark.

To apply Algorithm~\ref{alg:general-finite-player-stopping}, we first replace the hard stopping by a stopping-fraction state $p_t\in[0,1]$ and a nonnegative intensity control $\lambda_t\in [0,\lambda_{\max}]$. Since the game is symmetric and the exact equilibrium is symmetric, we will only consider using one common stopping fraction and one common intensity. The regularized dynamics are
$$
d X_t=\sigma\,dW_t,
\qquad
dp_t=(1-p_t)\lambda_t\,dt,
\qquad
\lambda_t\in[0,\lambda_{\max}],
$$
with initial condition
$$
X_0=x,
\qquad
p_0=0.
$$
Here $p_t$ is interpreted as the fraction of stopping mass already exercised by time $t$, while $1-p_t$ is the residual unstopped mass.

Fix a truncated time horizon $T>0$, the regularized objective is
$$
J(x,\lambda)
=
\dbE_{x}\!\left[
\int_0^T e^{-\rho t}N(X_t-K)^+(1-p_t)\lambda_t\,dt
\right].
$$
We parametrize the deterministic policy by $\lambda_\phi$, the value function by $V_\theta$, and the raw advantage-rate function by $\bar q_\psi$. The reparameterized advantage is
$$
q_\psi(s,\lambda)=\bar q_\psi(s,\lambda)-\bar q_\psi(s,\lambda_\phi(s)),
$$
so that
$$
q_\psi(s,\lambda_\phi(s))=0.
$$

In all experiments, we fix the game parameters as
$$
N=2,\qquad \eta=0.5,\qquad x_0=0,\qquad K=1,\qquad \sigma=1.5,\qquad \rho=2.
$$
The corresponding closed-form stopping threshold is
$
x^* = K+\frac{\sigma}{\sqrt{2\rho}} = 1.75.
$
For the numerical scheme, we use terminal horizon $T=20$,
time step
$
\Delta t = 0.05$, 
and hence $400$
time steps per episode.

The actor is parameterized by a threshold-type deterministic intensity policy $\lambda_\phi(X)
=
\lambda_{\max}\,\sigma\!\bigl(c(X-b)\bigr)$, where $b$ is the learned soft stopping boundary and $c$ is the learned slope. The critic is factorized as $V_\theta(X,p)=(1-p)\,U_\theta(X)$, and the raw advantage network is parameterized as $\bar q_\psi(X,p,a)=(1-p)\,\bar Q_\psi(X,a)$. Both $U_\theta$ and $\bar Q_\psi$ are implemented by fully connected neural networks with two hidden layers of width $64$, ReLU activation functions, and scalar output.

For training, we use an $L$-step temporal-difference target with $L=10$, replay buffer size $50000$,
batch size
$64$,
and soft target-network update parameter
$\tau=0.01$. The learning rates are
$$
\texttt{lr\_actor}=10^{-6},
\qquad
\texttt{lr\_critic}=10^{-2}.
$$
Exploration is implemented by Gaussian perturbation of the deterministic actor output. The exploration standard deviation is initialized at $0.3$,
decays geometrically with factor
$0.995$,
and is lower bounded by
$0.02$.
We train for $800$ episodes and perform $10$
gradient updates per episode.

\medskip
The learned actor and critic are plotted in Figure~\ref{fig:potential_intensity}, where the left figure illustrates the learned intensity, and the right figure shows the learned value function (as a function of $x$). We can see that the learned intensity curve   approximates closely the optimal stopping time.

\begin{figure}
    \centering
    \includegraphics[width=0.4\linewidth]{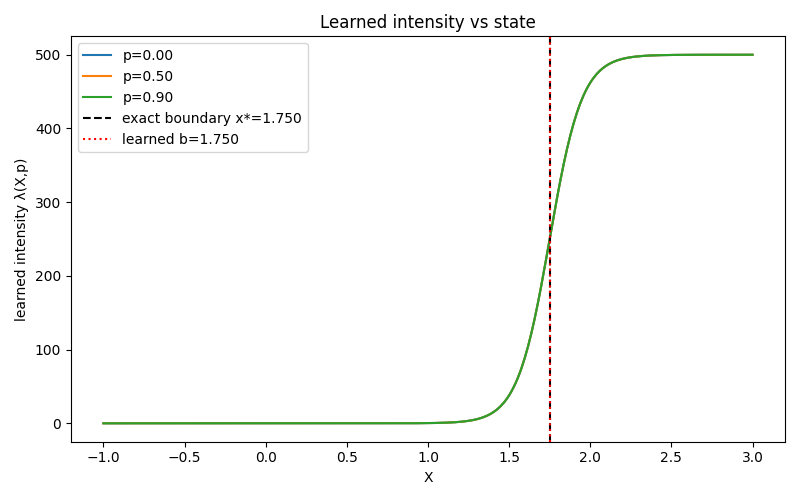}
    \includegraphics[width=0.4\linewidth]{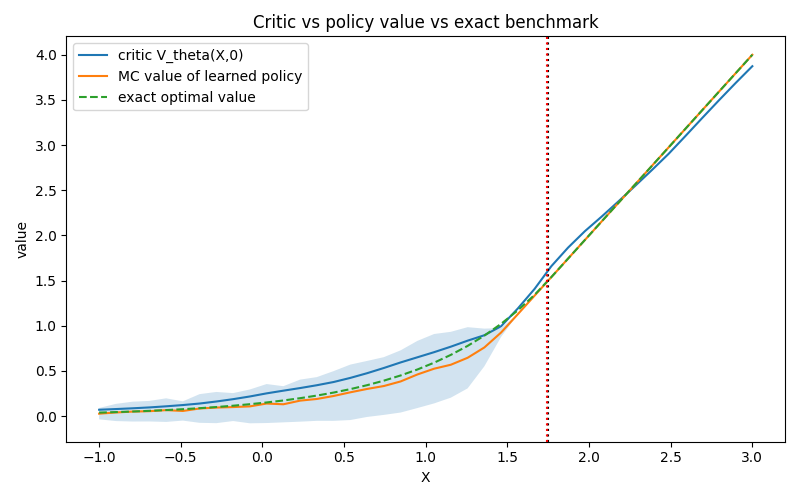}
    \caption{Left: Learned intensity.   Right: Learned value function.}
    \label{fig:potential_intensity}
\end{figure}

\subsection{Experiment with $\alpha$-potential stopping game}\label{sec:experiment-alpha}
We next test the proposed method for the finite-player optimal stopping game introduced in Section~\ref{sec:separation_game}.
In all experiments, we set
$
x_0=0, K=1, \sigma=1.5, \rho=2,
\kappa_1=0.5, \kappa_2=2.0,
$
and consider
$
N\in\{2,5,10,20, 40\}.
$

For numerical implementation, we adopt a bounded absolutely continuous intensity control
$$
\mathrm d\nu_t=\lambda_t\mathrm dt,
\qquad
\mathrm dp_t=(1-p_t)\lambda_t\,\mathrm dt,
$$
and discretize the state dynamics by
$$
X_{k+1}=X_k+\sigma\sqrt{\Delta t}\,\xi_k,
\qquad
\xi_k\sim\mathcal N(0,1).
$$
We use
$
T=20, \Delta t=0.05, \gamma=e^{-\rho\Delta t},
$
so that each episode contains $400$ time steps. For numerical stability, the maximal intensity $\lambda_{\max}$ is set to be $200$.

The critic consists of a value network $V_\theta$ and a raw advantage-rate network $\bar q_\psi$, each implemented as a fully connected neural network with two hidden layers of width $64$ and ReLU activations. The advantage-rate is reparameterized as
$$
q_\psi(s,a)=\bar q_\psi(s,a)-\bar q_\psi\bigl(s,\mu_\phi(s)\bigr),
$$
as in CT-DDPG. We use a replay buffer of size $50{,}000$, batch size $64$, multi-step TD length $L=10$, soft target update parameter $\tau=0.005$,
critic learning rate
$5\times 10^{-4}$, actor learning rate
$10^{-4}$, and terminal penalty weight equal to $1$. Exploration is Gaussian around the deterministic actor output with initial standard deviation $0.15$, geometric decay factor $0.995$, and lower bound $0.02$. In each episode we perform $10$ gradient updates. The centralized potential phase is trained for $700$ episodes, and the best-response phase for $500$ episodes. Actor updates are delayed: in the potential phase the actor is frozen for the first $150$ episodes, and in the best-response phase for the first $100$ episodes; afterwards the actor is updated once every $5$ critic updates. All runs start from $x_0=0$, and evaluation is performed every $25$ episodes. 

For every $N$, denote $\hat p^N$ the learned symmetric optimal policy from the $\alpha_N$-potential function $\Phi_N^{\mathrm{sym}}(p)$. Let $\hat p^{N,\mathrm{BR}}$ denote the numerically computed best response against $\hat p^N$.

\begin{figure}
    \centering
    \includegraphics[width=0.3\linewidth]{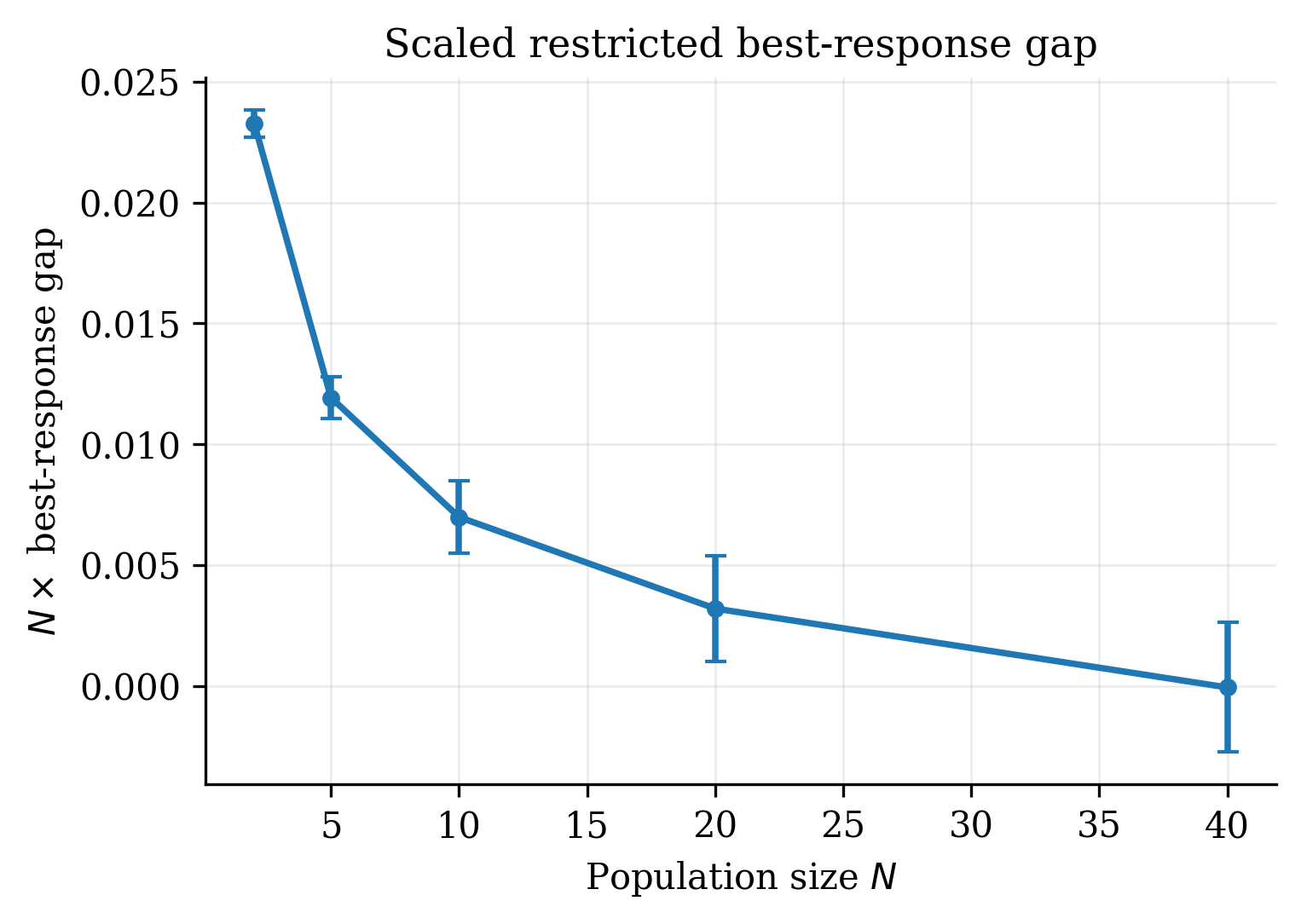}
    \caption{Gap with number of agents $N$.}
    \label{fig:gap_num}
\end{figure}

\begin{algorithm}[H]
\label{alg:general-finite-player-stopping}
\caption{Potential-CT-DDPG}
\begin{algorithmic}[1]
\Require
number of players $N$, time step $h$, horizon $H$, discount $\gamma=e^{-\rho h}$, multi-step TD length $L$, replay buffer $\mathfrak R$, batch size $B$, actor $\mu_\phi$, value net $V_\theta$, raw advantage net $\bar q_\psi$, target value net $V_{\theta^-}$, exploration scale $\sigma_{\mathrm{exp}}$, critic learning rates $\eta_\theta,\eta_\psi$, actor learning rate $\eta_\phi$, terminal  weight $\alpha_T$, soft-update parameter $\tau$.
\State Initialize $(\phi,\theta,\psi)$ and set $\theta^- \gets \theta$
\For{episode $n=1,\dots,N_{\mathrm{epi}}$}
    \State Initialize exogenous state $X_0$ and stopping fractions $p_0=(0,\dots,0)$
    \For{$k=0,\dots,H-1$}
        \State Observe state
        $
        s_k=(t_k,X_k,p_k)
        $, simulate noise $\xi_k$
        \State Compute deterministic intensity
        $
        \bar \lambda_k=\mu_\phi(s_k)\in [0,\lambda_{\max}]^N
        $
        \State Add local exploration
        $
        \lambda_k
        =
        \Pi_{[0,\lambda_{\max}]^N}\bigl(\bar \lambda_k+\sigma_{\mathrm{exp}}\zeta_k\bigr),
        \qquad
        \zeta_k\sim N(0,I_N)
        $
        \State Simulate the exogenous state
        $
        X_{k+1}=\mathsf{SimState}(t_k,X_k,\lambda_k,\xi_k)
        $
        \State Update randomized stopping fractions
        $$
        \Delta p_k^i=(1-p_k^i)\bigl(1-e^{-h \lambda_k^i}\bigr),
        \qquad
        p_{k+1}^i=p_k^i+\Delta p_k^i,
        \qquad i=1,\dots,N
        $$
        \State Collect stage reward
        $
        r_k
        $
        \State Store transition $(s_k,\lambda_k,r_k,s_{k+1})$ in replay buffer $\mathfrak R$
    \EndFor

    \For{each gradient step}
        \State Sample $B$ trajectory fragments of length $L$ from $\mathfrak R$
        \State Define the reparameterized advantage
        $
        q_\psi(s,\lambda)
        =
        \bar q_\psi(s,\lambda)-\bar q_\psi(s,\mu_\phi(s))
        $
        \State For each sampled fragment, form the multi-step target
        $$
        Y^{(b)}
        =
        \sum_{\ell=0}^{L-1}
        \gamma^\ell
        \Bigl(
        r_{k+\ell}^{(b)}
        -
        h\,q_\psi\bigl(s_{k+\ell}^{(b)},\lambda_{k+\ell}^{(b)}\bigr)
        \Bigr)
        +
        \gamma^L
        V_{\theta^-}\bigl(s_{k+L}^{(b)}\bigr),
        \qquad b=1,\dots,B
        $$
        \State Sample $B$ terminal states/payoffs $(s_K^{(b)},R_T^{(b)})$ from $\mathfrak R$
        \State Compute critic loss
        $$
        \mathcal L_{\mathrm{M}}
        =
        \frac1B\sum_{b=1}^B
        \Bigl(
        V_\theta\bigl(s_k^{(b)}\bigr)-Y^{(b)}
        \Bigr)^2,\quad
        \mathcal L_{\mathrm{C}}
        =
        \frac1B\sum_{b=1}^B       \Bigl(V_\theta\bigl(s_K^{(b)}\bigr)-R_T^{(b)}
        \Bigr)^2
        $$
        $$
        \mathcal L_{\mathrm{critic}}
        =
        \mathcal L_{\mathrm{M}}+\alpha_T\mathcal L_{\mathrm{C}}
        $$
        \State Update value net
        $
        \theta \gets \theta-\eta_\theta \nabla_\theta \mathcal L_{\mathrm{critic}}
        $
        \State Update raw advantage net
        $
        \psi \gets \psi-\eta_\psi \nabla_\psi \mathcal L_{\mathrm{M}}
        $
        \State Sample $B$ states $\{s_k^{(b)}\}_{b=1}^B$ from $\mathcal R$
        \State Compute actor loss
        $
        \mathcal L_{\mathrm{actor}}
        =
        \frac1B\sum_{b=1}^B
        \bar q_\psi\bigl(s_k^{(b)},\mu_\phi(s_k^{(b)})\bigr)
        $
        \State Update actor by gradient ascent
        $
        \phi \gets \phi+\eta_\phi \nabla_\phi \mathcal L_{\mathrm{actor}}
        $
        \State Soft-update target value net
        $
        \theta^- \gets \tau\theta + (1-\tau)\theta^-
        $
    \EndFor
\EndFor
\end{algorithmic}
\end{algorithm}

 Define
$$
V_N^{\mathrm{sym}}
:=
J_i(\hat p^N,\dots,\hat p^N),
\qquad
V_N^{\mathrm{BR}}
:=
J_i(\hat p^{N,\mathrm{BR}},\hat p^N,\dots,\hat p^N),\qquad \forall\, i\in [N],
$$
and the empirical Nash-equilibrium gap
$$
\mathrm{Gap}_N:=V_N^{\mathrm{BR}}-V_N^{\mathrm{sym}}.
$$
From earlier analysis for the example \ref{example-1}  we know
$
\mathrm{Gap}_N=\cO(N^{-1})$.  Figure~\ref{fig:gap_num}  illustrates the  scaling of $\mathrm{Gap}_N$ against $N$ for $N=2,5,10, 20, 40$.  Figure~\ref{fig:training_curve} plots two training curves for $N=2$ and $N=10$. The numerical result shows 
the scaling of $\mathrm{Gap}_N$ being approximately $O(N^{-1})$.

\begin{figure}
    \centering
    \includegraphics[width=0.4\linewidth]{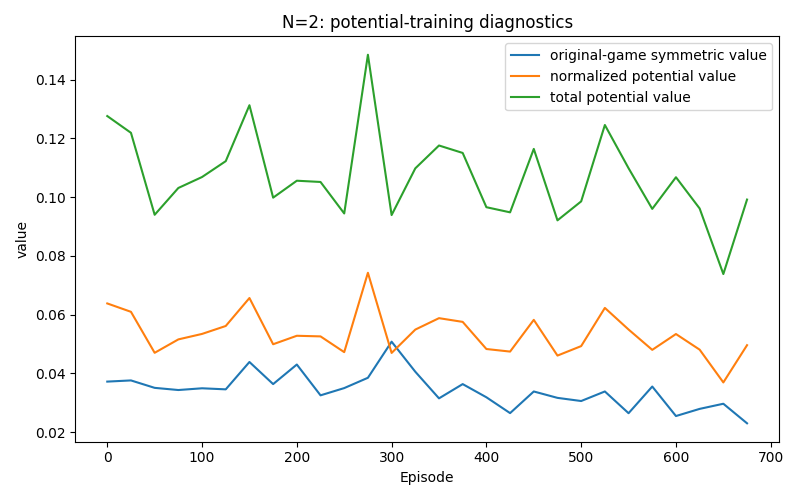}
    \includegraphics[width=0.4\linewidth]{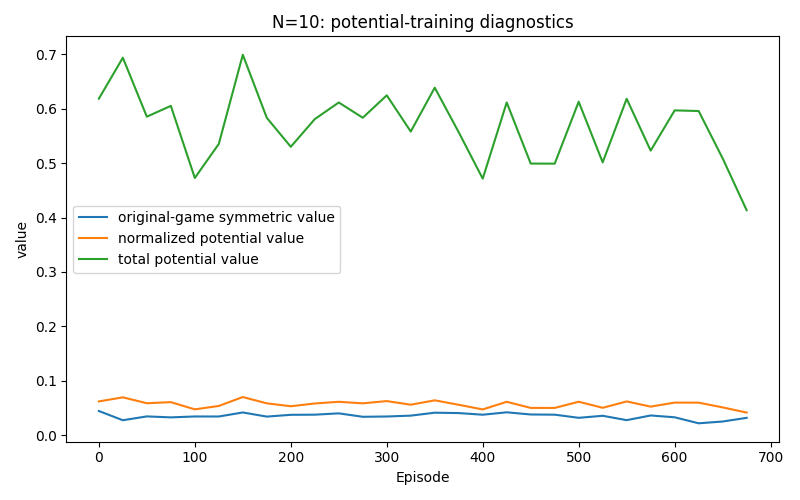}
    \caption{Training curves. Left: $N=2$.   Right: $N=10$.}
    \label{fig:training_curve}
\end{figure}

\begin{figure}
    \centering
    \includegraphics[width=0.4\linewidth]{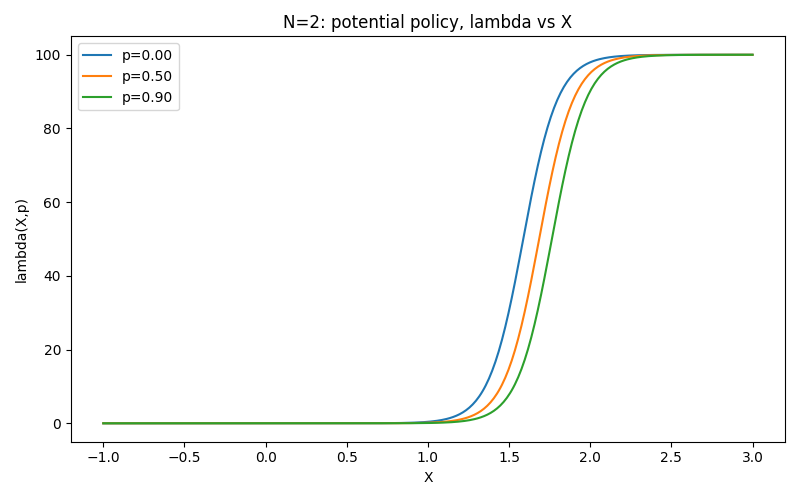}
    \includegraphics[width=0.4\linewidth]{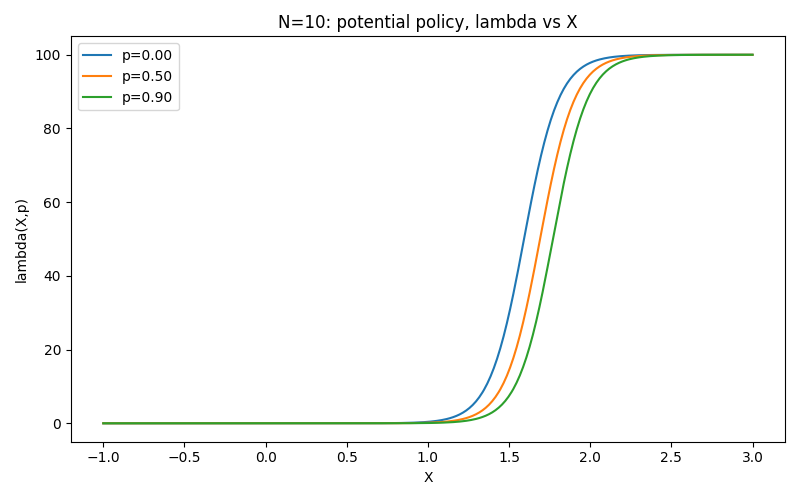}
    \caption{$\lambda$. Left: $N=2$.  Right: $N=10$.}
    \label{fig:p_x}
\end{figure}

\section{Conclusion} This paper studies directly  finite $N$-player games with player-dependent payoff kernels. It first embeds pure stopping times into independently randomized cumulative stopping processes. This convexification preserves pure-profile payoffs and pure Nash equilibria, preserves $\alpha$-potentiality under the canonical randomized extension, and introduces no relaxation gap for potential optimization. For games with local stopped-status interactions,  an $\alpha$-potential function is constructed and its maximization is converted into a multidimensional singular-control problem with local gradient constraints and a separate nonlocal condition for finite jumps. Finally,  a Potential-CT-DDPG algorithm is proposed for learning approximate potential maximizers from simulated trajectories when the model coefficients are unknown.

\bibliography{references}

\end{document}